\newtheorem{defn}{Definition}
\newtheorem{lem}{Lemma}
\newtheorem{thm}{Theorem}
\newtheorem{pro}{Proposition}
\newtheorem{rem}{Remark}
\newcommand{\C}{\mathbb{C}}
\newcommand{\CC}{\widehat{\mathbb{C}}}
\newcommand{\D}{\mathbb{D}}
\newcommand{\Z}{\mathbb{Z}}
\newcommand{\N}{\mathbb{N}}
\newcommand{\Julia}{\mathcal{J}}
\newcommand{\Fatou}{\mathcal{F}}
\newcommand{\HPcFP}{\mathrm{HPcFP}}
\newcommand{\Postcrit}{\mathcal{P}}
\newcommand{\Pole}{\mathcal{D}}
\renewcommand{\outer}{\mathrm{out}}
\newcommand{\inner}{\mathrm{in}}
\newcommand{\annulus}[2]{A\left(#1,#2\right)}
\newcommand{\modulus}[2]{\mathrm{mod}\left(#1,#2\right)}
\newtheorem*{thmA}{Theorem A}
\title{On McMullen-like mappings}
\author{\small Antonio Garijo \\
{\small Dept.~d'Eng.~Inform\`atica i Matem\`atiques}\\
{\small Universitat Rovira i Virgili}\\
{\small Av. Pa\"isos Catalans 26}\\
{\small Tarragona 43007, Spain} \and 
{\small S\'ebastien Godillon}\\ 
{\small Dept.~de Matem\`atica Aplicada i An\`alisi}\\
{\small Universitat de Barcelona}\\
{\small Gran Via de les Corts Catalanes, 585}\\
{\small 08007 Barcelona, Spain} }
\date{March 10, 2014}
\begin{document}


\maketitle

\begin{abstract}
{We introduce a generalization of the McMullen family  $f_{\lambda}(z)=z^n + \lambda/z^d$. In 1988 C. McMullen showed that the Julia set of $f_{\lambda}$ is a Cantor set of circles if and only if $1/n+1/d <1$ and the simple critical values of $f_\lambda$ belong to the trap door. We generalize this behavior and we define  a McMullen-like mapping as a rational map $f$  associated to a hyperbolic postcritically finite  polynomial $P$ and a pole data $\Pole$ where we 
encode, basically, the location of every pole of $f$ and the local degree at each pole. In the McMullen family the polynomial $P$ is $z \mapsto z^n$ and the pole data $\Pole$ is the pole located at the origin that maps to infinity with local degree $d$. As in the McMullen family $f_{\lambda}$, we can characterize a McMullen-like mapping using an arithmetic condition depending only on the polynomial $P$ and the pole data $\Pole$. We prove that the arithmetic condition is necessary using the theory of Thurston's obstructions, and sufficient by quasiconformal surgery.}
\end{abstract}

\tableofcontents

\newpage


\section{Introduction}\label{sec:intro}

The {\it Fatou set} of a rational map $f$, denoted by $\Fatou(f)$, is defined to be the set of points at which the family of iterates of $f$ is a normal family in the sense of Montel. The complement of the Fatou set, in the Riemann sphere $\CC$, is the {\it Julia set}  and is denoted by $\Julia(f)$. By definition, the Fatou set is an open set and the the Julia set is closed. The Julia set is also the closure of the set of repelling periodic points of $f$, and it is the set where $f$ has sensitive dependence on initial conditions. Both sets, $\Fatou(f)$ and $\Julia(f)$, are completely invariant. Equivalently, the Julia set $\Julia(f)$ is the smallest closed set containing at least three points which is completely invariant under $f$.  For a deep and helpful introduction on iteration of rational maps see \cite{BeardonBook,CarlesonGamelinBook,MilnorBook,SteinmetzBook}.  One of the goals in complex dynamics are to study the topological properties of the Julia and Fatou sets of $f$ and dynamics of $f$ restricted to these sets.

In 1988 C. McMullen (\cite{AutomorphismsRationalMaps}) showed the first example of a rational map whose Julia set is a Cantor set of circles, the rational map that exhibits this phenomenon, hereafter McMullen family,  is $f_{\lambda}(z)=z^n+\lambda/z^d$  for some values of $n,d\in\N$ and $\lambda\in\C$. This kind of rational maps could be viewed as a singular perturbation of the polynomial  $z\mapsto z^n$ when we add a pole at the origin of order $d$. McMullen family has focused the attention for several reasons.  On one hand they exhibits classical Julia sets including  Cantor sets, Sierpi\'nski curves, and Cantor sets of circles (\cite{Trichotomy}) and on the other hand the parameter space has complex dimension one, since the free critical points of $f_{\lambda}$ behave in a symmetric way.  The McMullen family has been studied extensively by R. Devaney et al. (\cite{Trichotomy, EvolutionMcMullenDomain, SingularPerturbationsMcMullenDomain}), N. Steinmetz (\cite{DynamicsMcMullenFamily}) and Qiu W., P. Roesch, Wang X. and Yin Y. (\cite{DynamicsMcMullenMaps, HyperbolicComponentsMcMullenMaps}) among others. We refer to \cite{SingularPerturbationsComplexPolynomials} for a survey of the main results about singular perturbations of complex polynomials and references therein.

For the polynomial $z\mapsto z^n$, with $n\geqslant 2$,  infinity
and the origin are super-attracting fixed points and the Julia set is the unit
circle. When we add the perturbation $\lambda/z^d$  in the McMullen family, 
several aspects of the dynamics remain the same, but
others change dramatically.  For example, when $\lambda \neq 0$, the
point at $\infty$ is still a super-attracting fixed point and there
is an immediate basin of attraction of $\infty$ that we call $V_{\infty}$. On the other hand, there is a neighborhood of the pole located at the origin
that is now mapped $d$-to-1 onto a neighborhood of $\infty$. When this neighborhood is
disjoint from $V_{\infty}$ we call it the \emph{trap door} and denote it by
$T_0$. Every point that escapes to infinity and does not lie in
$V_{\infty}$ has to do so by passing through $T_0$. Since the degree of $f_{\lambda}$
changes from $n$ to $n+d$, some additional critical points are
created. The set of critical points includes $\infty$ and $0$ whose
orbits are completely determined, so there are $n+d$ additional
``free'' critical points.  The orbits of these points are of
fundamental importance in characterizing the Julia set of $f_{\lambda}$.  

McMullen showed that if the arithmetic condition $1/n+1/d<1$ is satisfied and the free critical values lie in the trap door $T_0$ then the 
Julia set of $f_{\lambda}$ is a Cantor set of circles (\cite{AutomorphismsRationalMaps,Trichotomy}). Under these assumptions we notice that the $n+d$ free critical points belong 
to a doubly connected  Fatou component $A$  separating the trap door $T_0$ and the immediate basin of infinity $V_{\infty}$ and such that $f_{\lambda}$ maps 
the annulus $A$ onto the trap door $T_0$ with degree $n+d$. We also notice that the minimal degree of $f_{\lambda}$ satisfying this arithmetic condition is 5.

In the last years they have appeared in the literature several woks dealing with  singular perturbations of polynomials of the form $P_{c}(z)=z^n+c$ where $c$ is chosen to be the center of a hyperbolic component of the corresponding Multibrot set  and adding a perturbation with one or several poles (see \cite{GeneralizedMcMullenDomain,SingularPerturbationsQuadraticMultiplePoles}). 
Figure~\ref{fig:mcmullen_ex1} display the Julia set of three singular perturbation of polynomials. The first one corresponds to a member of the McMullen family, concretely the rational map $z^3-0.01/z^3$. The second one is the Julia set of the rational map $z^3+i-10^{-7}/z^3$, that corresponds to a singular perturbation of the polynomial $z^3+i$, that exhibits a super-attracting cycle $0 \mapsto i \mapsto 0$, when we add a pole at the origin. Finally, the third one is the Julia set of the rational map $z^2-1+10^{-22}/(z^7(z+1)^5)$ that corresponds to a perturbation of the quadratic polynomial $z^2-1$, with super-attracting cycle $0 \mapsto -1 \mapsto 0$,  when we add two poles, one at $z=0$ and another one  at $z=-1$.  In this figure we also show the dynamical plane of the corresponding polynomial and we mark the Fatou domain with a number where we add a pole with corresponding local degree.

\begin{figure}[ht]
	\centering
	\includegraphics[width=0.8\textwidth]{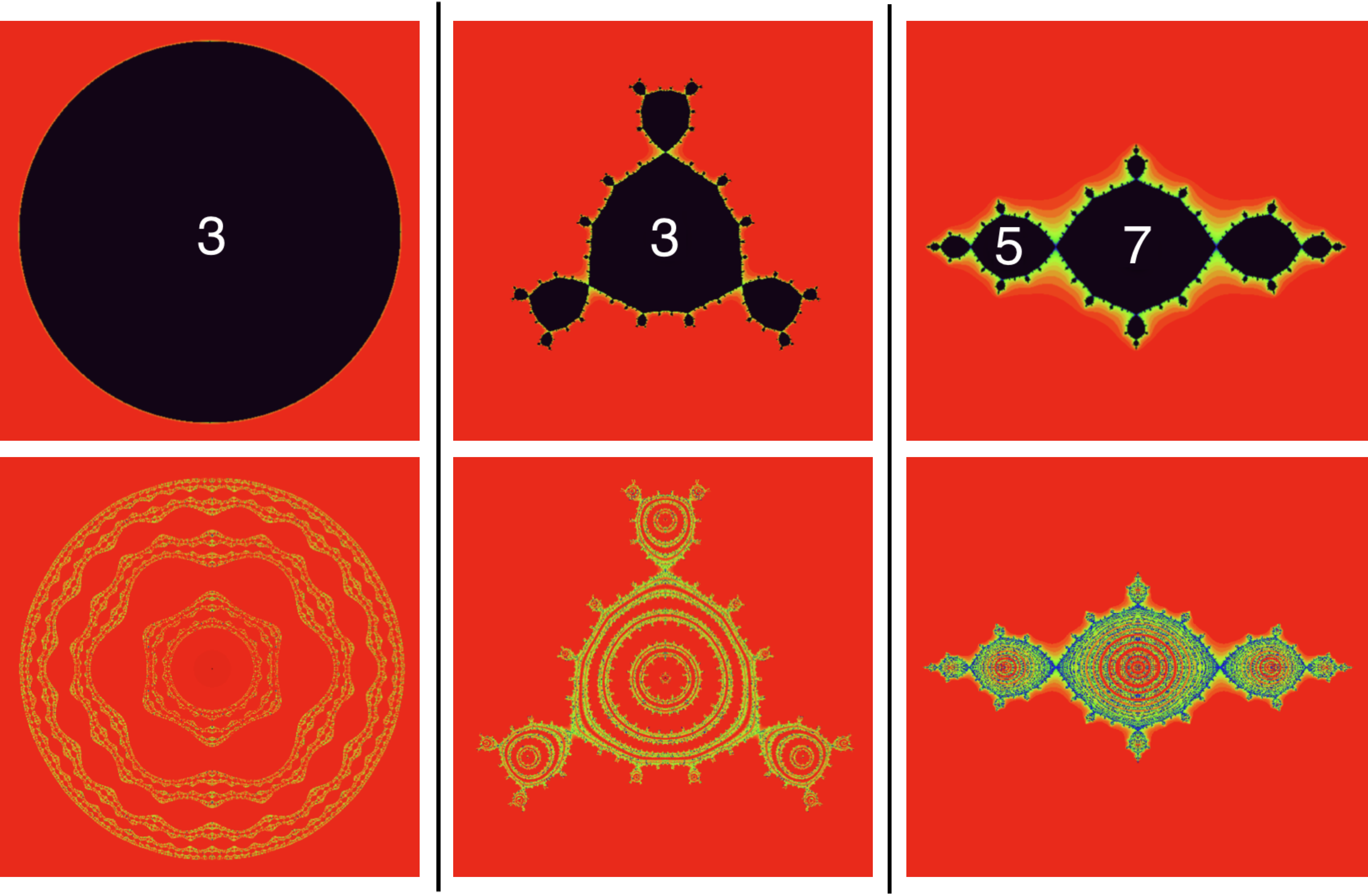}
	\setlength{\unitlength}{220pt}		  
	   \caption{\small{Three examples of McMullen-like mappings. In the left, we show the dynamical plane of the polynomial $z^3$ (up) and the rational map $z^3 -0.01/z^3$ (down). In the middle, the cubic polynomial $z^3+i$ and the rational map $z^3+i-10^{-7}/z^3$. And finally in the right, the quadratic polynomial $z^2-1$ and the rational map $z^2-1 + 10^{-22}/(z^7(z+1)^5)$. In each case, we mark with the local degree the bounded Fatou component where we put a pole.}}    
	\label{fig:mcmullen_ex1}
\end{figure}

The main goal of this paper is to present an unified approach to this kind of dynamical systems. We firstly define what we call a {\it McMullen-like mapping}. Here we give an idea  and we refer to the next section for a precise definition of it. Before the definition we need two ingredients:  the first one is a  hyperbolic postcritically finite polynomial $P$ (hereafter $\HPcFP$)  and the second one is the pole data $\Pole$ that encodes the information about the poles. 

A McMullen-like mapping of type $(P,\Pole)$, formed by a $\HPcFP$ $P$ and a pole data $\Pole$, is a rational map $f$ verifying the following conditions. The first condition is that $\infty$ is a super-attracting fixed point of $f$. We denote by $V_{\infty}$ the immediate  basin of attraction of infinity and we require that $\partial V_{\infty}$ is a homeomorphic copy of the Julia set of the polynomial $P$. The second condition is about the trap doors, these trap doors are simply connected domains and the rational map $f$ sends  every trap door onto $V_{\infty}$ according to the pole data. Moreover, around every trap door we require the existence of an annulus $A$ (containing some critical points) and such that $f$ sends $A$ onto a simply connected domain. This simply connected domain is mapped in a similar way as the polynomial $P$ until it reaches the close trap door. And finally, outside the components that appears in the pole data we basically apply the  dynamics of the polynomial $P$. 

The main result of this paper is about the existence of McMullen-like mappings. More precisely, given a pair $(P,\Pole)$, formed by a $\HPcFP$ $P$ and a pole data $\Pole$, we can characterize the existence of a McMullen-like mapping of type $(P,\Pole)$ under an arithmetic condition.  We also describe the Julia and the Fatou set of any McMullen-like mapping. 

We organize the rest of the paper in the following way. In Section \ref{sec:def} we give the precise definition of a McMullen-like mapping, showing some examples from the literature of this kind of rational maps. We also describe the Julia set of a McMullen-like mapping. In Section \ref{sec:existence} we state Theorem A which is the main result of this work. In Theorem A we characterize the existence of  a McMullen-like mapping of type $(P,\Pole)$ using an arithmetic condition depending on the polynomial $P$ and the pole data $\Pole$.  The rest of Section \ref{sec:existence} is devoted to prove Theorem A. In order to prove the necessity  of this arithmetic condition  (Subsection \ref{subsec:necessity}) we use the theory developed by W. Thurston about obstructions (see \cite{ThurstonProof, McMullenBook}). Then using quasiconformal surgery (see \cite{AhlforsQuasiconformalBook,QuasiconformalSurgeryBook}) we are able to construct a McMullen-like mapping of type $(P,\Pole)$ assuming only the arithmetic condition (Subsection \ref{subsec:construction}), proving thus the sufficiency of this arithmetic condition. Finally, in Section \ref{sec:new_examples} we show some new examples  
of McMullen-like mappings and we give an explicit expression of a McMullen-like mapping of minimal degree 4.

\bigskip
\noindent \emph{Acknowledgments.} The first author are partially supported by the Catalan
grant 2009SGR-792 and by the Spanish grant MTM-2008-01486 Consolider
(including a FEDER contribution).


\section{Definition of a McMullen-like mapping and properties}\label{sec:def}

\subsection{Definition of a McMullen-like mapping}\label{subsec:definition}

Let $P$ be a hyperbolic postcritically finite  polynomial, we shall write $\HPcFP$ in short, namely a polynomial map $P:\C\rightarrow\C$ of degree $n=\deg(P)\geqslant 2$ such that every critical point is eventually mapped under iteration to a super-attracting periodic cycle. The Julia set $\Julia(P)$ is connected  and the Fatou set $\Fatou(P)$ contains countably many connected components, called Fatou domains, which are simply connected. Moreover the unbounded Fatou domain, denoted by $U_{\infty}$, is a completely  invariant super-attracting basin with $\partial U_{\infty}=\Julia(P)$, and every bounded Fatou domain $U$ is eventually mapped to a periodic cycle of immediate super-attracting basins (see \cite{BeardonBook,CarlesonGamelinBook,MilnorBook,SteinmetzBook}).

We arbitrary chose a labeling of the finitely many periodic bounded Fatou domains, of the following form
$$\{U_{i,j}\ /\ 1\leqslant i\leqslant N\ \text{and}\ j\in\Z/p_{i}\Z\}$$
where $ N\geqslant 1$ is the number of bounded super-attracting periodic cycles, $p_{i}\geqslant 1$ is the period of the $i$-th cycle, and so that $P(U_{i,j})=U_{i,j+1}$ for every $1\leqslant i\leqslant N$ and $j\in\Z/p_{i}\Z$. Moreover for every periodic bounded Fatou domain $U_{i,j}$, we denote by $n_{i,j}$ the degree of the restriction $P|_{U_{i,j}}$, which coincides with the degree of the restriction $P|_{\partial U_{i,j}}$. Notice that the Riemann-Hurwitz formula gives $n-1=\sum_{i,j}(n_{i,j}-1)$.

\begin{defn}[Pole data]\label{DefPoleData}
A pole data $\Pole$ associated to a $\HPcFP$ $P$ is a nonempty collection of periodic bounded Fatou domains of $P$, each provided with a positive integer. More precisely, $\Pole$ is the data of a nonempty subset of $\{U_{i,j}\ /\ 1\leqslant i\leqslant N\ \text{and}\ j\in\Z/p_{i}\Z\}$ and a function from this subset to $\N\setminus\{0\}$ denoted by $U_{i,j}\mapsto d_{i,j}$.

With abuse of notation, we write $U_{i,j}\in\Pole$ if and only if $U_{i,j}$ is picked in the pole data $\Pole$, and conversely $U\notin\Pole$ for every bounded Fatou domain $U$ which is not a $U_{i,j}\in\Pole$.

The degree of the pole data $\Pole$ is defined to be $\deg(\Pole):=d=\sum_{U_{i,j}\in\Pole}d_{i,j}\geqslant 1$.
\end{defn}

Remark that if a rational map $f:\CC\rightarrow\CC$ and a simply connected domain $V_{\infty}\subset\CC$ are so that $f|_{\partial V_{\infty}}$ is topologically conjugate to $P|_{\partial U_{\infty}}$, namely there exists an orientation preserving homeomorphism $\varphi:\partial U_{\infty}\rightarrow\partial V_{\infty}$ so that $f\circ\varphi=\varphi\circ P$ on $\partial U_{\infty}$, then we may define for every bounded Fatou domain $U$ of $P$ a simply connected domain $V(U)\subset\CC$ as the unique connected component of $\CC\setminus\varphi(\partial U)$ which does not contain $V_{\infty}$ (it is well defined because $\partial U$ is a Jordan curve since $P$ is hyperbolic, see for instance Lemma 19.3 in \cite{MilnorBook}). In order to lighten the notation, we write $V_{i,j}=V(U_{i,j})$ for every $1\leqslant i\leqslant N$ and $j\in\Z/p_{i}\Z$.

\begin{defn}[McMullen-like mapping]\label{DefMcMullenLikeMap}
Let $\Pole$ be a pole data associated to a $\HPcFP$ $P$.
A McMullen-like mapping of type $(P,\Pole)$, is a rational map $f:\CC\rightarrow\CC$ such that the following holds.
\begin{description}
	\item[(i)] There exists a simply connected domain $V_{\infty}\subset\CC$ such that $f(V_{\infty})=V_{\infty}$, and $f|_{\partial V_{\infty}}$ is topologically conjugate to $P|_{\partial U_{\infty}}$.
	\item[(ii)] For every $U\notin\Pole$, $f(V(U))=V(P(U))$.
	\item[(iii)] For every $U_{i,j}\in\Pole$, there exist a simply connected domain $T_{i,j}\subset V_{i,j}$, called a trap door, and a doubly connected domain $A_{i,j}\subset V_{i,j}\setminus\overline{T_{i,j}}$ which separates $\partial V_{i,j}$ from $\overline{T_{i,j}}$ such that
	\begin{itemize}
		\item $f(T_{i,j})=V_{\infty}$ and $f|_{T_{i,j}}$ has degree $d_{i,j}$;
		\item $f(A_{i,j})$ is a simply connected domain contained in $V(P(U_{i,j}))=V_{i,j+1}$ and $f|_{A_{i,j}}$ is a proper map (namely $f(\partial A_{i,j})=\partial f(A_{i,j})$);
		\item $f$ has no critical points in $\overline{V_{i,j}}\setminus(A_{i,j}\cup T_{i,j})$.
	\end{itemize}
	\item[(iv)] For every critical point $c$ of $f$, if $c$ is eventually mapped under iteration of $f$ into $V_{i,j}$ for some $U_{i,j}\in\Pole$, or equivalently if
	$$t_{c}=\min\{k\geqslant 1\ /\ \exists U_{i,j}\in\Pole,\ f^{k}(c)\in V_{i,j}\}<+\infty,$$
	then $c$ is mapped into the corresponding trap door, namely $f^{t_{c}}(c)\in T_{i,j}$.
\end{description}
\end{defn}

According to the points \textbf{(i)} and \textbf{(ii)} above, $f|_{V_{\infty}}:V_{\infty}\rightarrow V_{\infty}$ is a holomorphic branched covering of degree $n=\deg(P)$ with $n-1$ critical points counted with multiplicity, and for every $U\notin\Pole$, $f|_{V(U)}:V(U)\rightarrow V(P(U))$ is a holomorphic branched covering of same degree as $P|_{U}$ with the same number of critical points counted with multiplicity. The following lemma extends the point \textbf{(iii)} above by describing how $f$ acts on each $V_{i,j}$.

\begin{lem}\label{LemDynamicsOnVij}
For every $U_{i,j}\in\Pole$, denote by $\overline{A_{i,j}^{\outer}}$ the closed annulus between $\partial V_{i,j}$ and $\partial A_{i,j}$, and by $\overline{A_{i,j}^{\inner}}$ the closed annulus between $\partial A_{i,j}$ and $\partial T_{i,j}$. Then the action of $f$ on $\overline{V_{i,j}}=\overline{A_{i,j}^{\outer}}\cup A_{i,j}\cup\overline{A_{i,j}^{\inner}}\cup T_{i,j}$ is as follows:
\begin{itemize}
	\item $\overline{f(A_{i,j}^{\outer})}$ is the closed annulus in $\overline{V_{i,j+1}}$ between $\partial V_{i,j+1}$ and $\partial f(A_{i,j})$, and $f|_{A_{i,j}^{\outer}}$ is a holomorphic covering of degree $n_{i,j}$;
	\item $f(A_{i,j})$ is a simply connected domain in $V_{i,j+1}$, and $f|_{A_{i,j}}$ is a holomorphic branched covering of degree $n_{i,j}+d_{i,j}$ with $n_{i,j}+d_{i,j}$ critical points counted with multiplicity;
	\item $\overline{f(A_{i,j}^{\inner})}$ is the closed annulus between $\partial f(A_{i,j})$ and $\partial V_{\infty}$, and $f|_{A_{i,j}^{\inner}}$ is a holomorphic covering of degree $d_{i,j}$;
	\item $f(T_{i,j})=V_{\infty}$, and $f|_{T_{i,j}}$ is a holomorphic branched covering of degree $d_{i,j}$ with $d_{i,j}-1$ critical points counted with multiplicity.
\end{itemize}
In particular, $\overline{f(V_{i,j})}=\CC$ (and hence $f|_{V_{i,j}}$ is not a proper map).
\end{lem}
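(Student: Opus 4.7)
My plan is to work piece-by-piece along the decomposition $\overline{V_{i,j}}=\overline{A_{i,j}^{\outer}}\cup A_{i,j}\cup\overline{A_{i,j}^{\inner}}\cup T_{i,j}$, using Riemann--Hurwitz together with boundary-degree arguments to upgrade the partial information in Definition~\ref{DefMcMullenLikeMap} to the full description. Write $\gamma_{\outer}$ and $\gamma_{\inner}$ for the two boundary circles of $A_{i,j}$. The conjugacy $\varphi$ from point~\textbf{(i)} first gives that $f|_{\partial V_{i,j}}:\partial V_{i,j}\to\partial V_{i,j+1}$ is a covering of degree $n_{i,j}$, since $\partial V_{i,j}=\varphi(\partial U_{i,j})$ and $P|_{\partial U_{i,j}}$ has degree $n_{i,j}$. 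Because $f|_{T_{i,j}}:T_{i,j}\to V_{\infty}$ is a proper holomorphic map of degree $d_{i,j}$ between two topological disks, Riemann--Hurwitz (with $\chi=1$ on both sides) immediately produces exactly $d_{i,j}-1$ critical points in $T_{i,j}$ and forces $f|_{\partial T_{i,j}}:\partial T_{i,j}\to\partial V_{\infty}$ to be a covering of degree $d_{i,j}$.

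I would then treat the two ``side'' annuli. By point~\textbf{(iii)}, neither $\overline{A_{i,j}^{\outer}}$ nor $\overline{A_{i,j}^{\inner}}$ contains a critical point of $f$, and the properness of $f|_{A_{i,j}}$ forces $f(\gamma_{\outer})=f(\gamma_{\inner})=\partial f(A_{i,j})$, a single Jordan curve lying inside $V_{i,j+1}$. Restricted to $\overline{A_{i,j}^{\outer}}$, $f$ is therefore an unramified proper map between two closed annuli that sends one boundary circle to $\partial V_{i,j+1}$ with degree $n_{i,j}$ and the other to $\partial f(A_{i,j})$; any such map is a holomorphic covering of total degree $n_{i,j}$, and its image is the closed annulus in $\overline{V_{i,j+1}}$ bounded by $\partial V_{i,j+1}$ and $\partial f(A_{i,j})$. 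The same argument with $\partial T_{i,j}$ of boundary degree $d_{i,j}$ replacing $\partial V_{i,j}$ identifies $f|_{A_{i,j}^{\inner}}$ as a covering of degree $d_{i,j}$ onto the closed annulus in $\CC$ bounded by $\partial f(A_{i,j})$ and $\partial V_{\infty}$.

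For the central piece $A_{i,j}$ itself, properness from~\textbf{(iii)} makes $f|_{A_{i,j}}:A_{i,j}\to f(A_{i,j})$ a branched covering onto the simply connected domain $f(A_{i,j})\subset V_{i,j+1}$ of some finite degree $k$. Choosing $w\in f(A_{i,j})$ close to $\partial f(A_{i,j})$, the $k$ preimages of $w$ split by continuity between points close to $\gamma_{\outer}$ and points close to $\gamma_{\inner}$ with respective cardinalities $n_{i,j}$ and $d_{i,j}$ (the boundary degrees obtained above), so $k=n_{i,j}+d_{i,j}$. Riemann--Hurwitz applied to this annulus-to-disk branched covering then gives $\sum_{c}(m_c-1)=k=n_{i,j}+d_{i,j}$ critical points in $A_{i,j}$ counted with multiplicity. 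Finally, combining the four image pieces, $\overline{f(A_{i,j}^{\outer})}\cup\overline{f(A_{i,j})}=\overline{V_{i,j+1}}$ and $\overline{f(A_{i,j}^{\inner})}\cup\overline{f(T_{i,j})}=\CC\setminus f(A_{i,j})$, whose union is all of $\CC$. Therefore $f(\overline{V_{i,j}})=\CC$ and a fortiori $\overline{f(V_{i,j})}=\CC$, and since the bounded open set $V_{i,j}$ is not itself compact while $\CC$ is, $f|_{V_{i,j}}$ cannot be proper onto its image.

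The step I expect to be the most delicate is the identity $k=n_{i,j}+d_{i,j}$, which rests on the topological fact that for a proper holomorphic map from an open annulus onto an open disk the local boundary degrees at the two ends sum to the total degree of the branched covering; this could equivalently be rephrased as a homological degree computation for $f|_{\partial A_{i,j}}:\partial A_{i,j}\to\partial f(A_{i,j})$. A secondary subtlety is to check that $\partial f(A_{i,j})$ is a genuine Jordan curve lying compactly inside $V_{i,j+1}$, which requires combining properness of $f|_{A_{i,j}}$ with the absence of critical points in $\overline{A_{i,j}^{\outer}}$ in order to rule out any pinching of the image boundary.
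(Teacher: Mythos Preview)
Your proposal is correct and follows essentially the same strategy as the paper: obtain the boundary degrees $n_{i,j}$ on $\partial V_{i,j}$ (from the conjugacy with $P$) and $d_{i,j}$ on $\partial T_{i,j}$ (from the trap-door data), propagate them through the critical-point-free annuli $A_{i,j}^{\outer}$ and $A_{i,j}^{\inner}$, deduce $\deg(f|_{A_{i,j}})=n_{i,j}+d_{i,j}$ by summing the two boundary contributions, and finish with Riemann--Hurwitz. The paper's proof is much terser---it records only the computation $\deg(f|_{A_{i,j}})=\deg(f|_{\partial V_{i,j}})+\deg(f|_{T_{i,j}})=n_{i,j}+d_{i,j}$ and then says ``the remaining easily follows by using the Riemann--Hurwitz formula''---but your more explicit treatment of the side annuli and of the final $\overline{f(V_{i,j})}=\CC$ claim is a faithful unpacking of exactly that sentence.
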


\begin{proof}
Since $f$ has no critical points in $\overline{V_{i,j}}\setminus(T_{i,j}\cup A_{i,j})$, it follows that $f|_{A_{i,j}}$ has degree
$$\deg(f|_{A_{i,j}})=\deg(f|_{\partial V_{i,j}})+\deg(f|_{T_{i,j}})=n_{i,j}+d_{i,j}$$
because $\deg(f|_{\partial V_{i,j}})=\deg(P|_{\partial U_{i,j}})=n_{i,j}$ from the point \textbf{(ii)} in Definition \ref{DefMcMullenLikeMap}. The remaining easily follows by using the Riemann-Hurwitz formula.
\end{proof}

Notice that each of the critical points $c\in A_{i,j}$ satisfies $f(c)\in V_{i,j+1}$, thus $t_{c}\leqslant p_{i}<+\infty$ and $t_{c}$ only depends on $A_{i,j}$, namely on $U_{i,j}\in\Pole$. The following lemma shows that $A_{i,j}$ may be chosen in Definition \ref{DefMcMullenLikeMap} in order that the whole image $f^{t_{c}}(A_{i,j})$ is actually the trap door containing $f^{t_{c}}(c)$.

\begin{lem}\label{LemPreimageTrapDoor}
For every $U_{i,j}\in\Pole$, we may assume without loss of generality that $f^{t_{i,j}}(A_{i,j})=T_{i,j+t_{i,j}}$ where
$$\ t_{i,j}=\min\{k\geqslant 1\ /\ U_{i,j+k}\in\Pole\}.$$
\end{lem}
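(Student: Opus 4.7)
The plan is to construct a sub-annulus $\tilde{A}_{i,j}\subset V_{i,j}$ fulfilling every requirement of Definition \ref{DefMcMullenLikeMap}(iii) together with the additional identity $f^{t_{i,j}}(\tilde{A}_{i,j})=T_{i,j+t_{i,j}}$, and then substitute $\tilde{A}_{i,j}$ for the original $A_{i,j}$. The construction pulls the trap door $T_{i,j+t_{i,j}}$ back through the $t_{i,j}$ iterates of $f$, selecting at each stage the distinguished connected component that carries the critical orbits of $f|_{A_{i,j}}$.

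I proceed by backward induction. Set $W_{t_{i,j}}:=T_{i,j+t_{i,j}}$. For each $\ell$ with $2\leqslant\ell\leqslant t_{i,j}$, the minimality of $t_{i,j}$ gives $U_{i,j+\ell-1}\notin\Pole$, so by \textbf{(ii)} and the remark following Definition \ref{DefMcMullenLikeMap} the restriction $f|_{V_{i,j+\ell-1}}:V_{i,j+\ell-1}\to V_{i,j+\ell}$ is a proper holomorphic branched covering between simply connected planar domains. Preimages of simply connected subdomains under such maps decompose into simply connected components, so fixing a critical point $c_{0}\in A_{i,j}$ and letting $W_{\ell-1}$ be the component of $f^{-1}(W_{\ell})\cap V_{i,j+\ell-1}$ containing $f^{\ell-1}(c_{0})$ yields a simply connected domain. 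A short induction using \textbf{(iv)} and the connectedness of each iterate $f^{\ell-1}(A_{i,j})$ then shows $f^{\ell-1}(A_{i,j})\subset W_{\ell-1}$, so every critical image is captured at every stage.

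For the final step, Lemma \ref{LemDynamicsOnVij} identifies $f|_{A_{i,j}}:A_{i,j}\to f(A_{i,j})\subset V_{i,j+1}$ as a proper branched covering of degree $n_{i,j}+d_{i,j}$ from an annulus onto a simply connected domain whose $n_{i,j}+d_{i,j}$ critical values all lie in $W_{1}$. The restriction $f:f^{-1}(W_{1})\cap A_{i,j}\to W_{1}$ is therefore proper of degree $n_{i,j}+d_{i,j}$ with its entire branching absorbed, and Riemann--Hurwitz forces the total Euler characteristic of $f^{-1}(W_{1})\cap A_{i,j}$ to be zero. Since the two boundary curves of $A_{i,j}$ both map onto $\partial f(A_{i,j})$, which is disjoint from $W_{1}$, this preimage set lies strictly inside $A_{i,j}$ and is bounded exclusively by Jordan curve preimages of $\partial W_{1}$. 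Setting $\tilde{A}_{i,j}$ to be the connected component that separates $\partial V_{i,j}$ from $\overline{T_{i,j}}$ and contains every critical point of $f|_{A_{i,j}}$ gives an annulus with $f(\tilde{A}_{i,j})=W_{1}$, and hence $f^{t_{i,j}}(\tilde{A}_{i,j})=W_{t_{i,j}}=T_{i,j+t_{i,j}}$.

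The delicate point is verifying that such a component $\tilde{A}_{i,j}$ really exists and has annular topological type. Here one uses the Riemann--Hurwitz bookkeeping above together with a winding-number argument around $T_{i,j}$: the vanishing of the total Euler characteristic, combined with the fact that neither of the original boundary curves of $A_{i,j}$ lies in the preimage, rules out spurious disk-type components carrying critical points and singles out a unique ``core'' annular component. Once $\tilde{A}_{i,j}$ is in place, the remaining conditions in Definition \ref{DefMcMullenLikeMap}(iii) follow immediately, since $\tilde{A}_{i,j}\subset A_{i,j}$ and the complement $A_{i,j}\setminus\overline{\tilde{A}_{i,j}}$ contains no critical points of $f$.
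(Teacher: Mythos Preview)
Your argument has a genuine gap at the induction step. The claim that ``a short induction using \textbf{(iv)} and the connectedness of each iterate $f^{\ell-1}(A_{i,j})$ then shows $f^{\ell-1}(A_{i,j})\subset W_{\ell-1}$'' is not justified and is in general false. Condition \textbf{(iv)} only controls the forward orbits of the \emph{critical points}; it says nothing about arbitrary points of $A_{i,j}$. Definition~\ref{DefMcMullenLikeMap}\textbf{(iii)} merely asserts that $f(A_{i,j})$ is some simply connected domain in $V_{i,j+1}$, and its successive images $f^{2}(A_{i,j}),\dots,f^{t_{i,j}}(A_{i,j})$ can perfectly well fill up large portions of $V_{i,j+2},\dots,V_{i,j+t_{i,j}}$, far beyond $T_{i,j+t_{i,j}}$. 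Connectedness of $f^{\ell-1}(A_{i,j})$ and the fact that it meets $W_{\ell-1}$ do not force containment.

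This breaks the next paragraph. You need the restriction $f:f^{-1}(W_{1})\cap A_{i,j}\to W_{1}$ to be proper of full degree $n_{i,j}+d_{i,j}$, but that requires $W_{1}\subset f(A_{i,j})$, which you have no reason to expect; conversely, if $f(A_{i,j})\subsetneq W_{1}$ then $f^{-1}(W_{1})\cap A_{i,j}=A_{i,j}$ and the map is not proper onto $W_{1}$ either. So the Riemann--Hurwitz bookkeeping and the ``core annular component'' conclusion are unsupported. The paper avoids this by defining the candidate annulus as the preimage of $W_{1}$ inside the \emph{whole} $V_{i,j}$ rather than only inside $A_{i,j}$; since $\overline{f(V_{i,j})}=\CC$ by Lemma~\ref{LemDynamicsOnVij}, properness is automatic. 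One then shows this full preimage $A'_{i,j}\subset V_{i,j}\setminus\overline{T_{i,j}}$ is a single annulus separating $\partial V_{i,j}$ from $\overline{T_{i,j}}$ by a Riemann--Hurwitz count together with the observation that any simply connected complementary region in $V_{i,j}$ must contain $T_{i,j}$. Note that the resulting $A'_{i,j}$ need not be contained in the original $A_{i,j}$ at all, so your final sentence (``since $\tilde{A}_{i,j}\subset A_{i,j}$ \dots'') would also have to be abandoned; one checks directly that $A'_{i,j}$ satisfies the requirements of Definition~\ref{DefMcMullenLikeMap}\textbf{(iii)}.
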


This result is similar to the first proposition of Section 3 in \cite{Trichotomy} for the McMullen mapping $z\mapsto z^{n}+\frac{\lambda}{z^{d}}$, except we can not use the symmetry of the map here.

\begin{proof}
Remark that $f^{t_{i,j}-1}(V_{i,j+1})=V_{i,j+t_{i,j}}$ from the definition of $t_{i,j}$ and the point \textbf{(ii)} in Definition \ref{DefMcMullenLikeMap}, and the critical values of $f^{t_{i,j}-1}|_{V_{i,j+1}}$ lie in $T_{i,j+t_{i,j}}$ from the point \textbf{(iv)}. Therefore the preimage of $T_{i,j+t_{i,j}}$ under $f^{t_{i,j}-1}|_{V_{i,j+1}}$ is a simply connected domain in $V_{i,j+1}$.

Let $A'_{i,j}$ be the set of points $z\in V_{i,j}$ such that $f(z)\in(f^{t_{i,j}-1}|_{V_{i,j+1}})^{-1}(T_{i,j+t_{i,j}})$. Since $f(V_{i,j})=\CC$ from Lemma \ref{LemDynamicsOnVij}, it follows that $f(A'_{i,j})$ is a simply connected domain in $V_{i,j+1}$, $f|_{A'_{i,j}}$ is a proper map, and $f^{t_{i,j}}(A'_{i,j})=T_{i,j+t_{i,j}}$. Notice that $A'_{i,j}\subset V_{i,j}\setminus\overline{T_{i,j}}$ because $f(T_{i,j})=V_{\infty}$ is outside $V_{i,j+1}$. Furthermore, every critical point $z\in A_{i,j}$ satisfies $f^{t_{i,j}}(c)\in T_{i,j+t_{i,j}}$ from the definition of $t_{i,j}$ and the point \textbf{(iv)}, and hence is in $A'_{i,j}$. Therefore $f$ has no critical points in $\overline{V_{i,j}}\setminus(A'_{i,j}\cup T_{i,j})$

Consequently, it is enough to show that $A'_{i,j}$ is a doubly connected domain which separates $\partial V_{i,j}$ from $\overline{T_{i,j}}$. At first remark that $f|_{A'_{i,j}}$ is actually a holomorphic branched covering of degree $\deg(f|_{A'_{i,j}})=n_{i,j}+d_{i,j}$ with $\nu(f|_{A'_{i,j}})=n_{i,j}+d_{i,j}$ critical points counted with multiplicity from Lemma \ref{LemDynamicsOnVij} since every critical point and every cocritical point of $f|_{A_{i,j}}$ is in $A'_{i,j}$ by definition. Denote by $\{C_{\ell}\ /\ 1\leqslant \ell\leqslant L\}$ the collection of $L\geqslant 1$ connected components of $A'_{i,j}$, and by $m_{\ell}\geqslant 1$ the number of connected components in every $\partial C_{\ell}$. The Riemann-Hurwitz formula gives
$$\forall 1\leqslant\ell\leqslant L,\quad 2-m_{\ell}=(2-1)\deg(f|_{C_{\ell}})-\nu(f|_{C_{\ell}})$$
that leads by summing to
\begin{equation}\label{EqLemPreimageTrapDoor}
	2L-\sum_{\ell=1}^{L}m_{\ell}=\sum_{\ell=1}^{L}\deg(f|_{C_{\ell}})-\sum_{\ell=1}^{L}\nu(f|_{C_{\ell}})=\deg(f|_{A'_{i,j}})-\nu(f|_{A'_{i,j}})=0.
\end{equation}
Furthermore, every simply connected components of $V_{i,j}\setminus\overline{A'_{i,j}}$ must contain some points which are mapped into $V_{\infty}\subset\CC\setminus\overline{f(A'_{i,j})}$, and hence must contain $T_{i,j}$ from Lemma \ref{LemDynamicsOnVij}. In particular there is no $m_{\ell}\geqslant 3$, and (\ref{EqLemPreimageTrapDoor}) implies that every $C_{\ell}$ is a doubly connected domain which separates $\partial V_{i,j}$ from $\overline{T_{i,j}}$. Assume the $(C_{\ell})_{1\leqslant \ell\leqslant L}$ are ordered labelled from $\partial V_{i,j}$ to $\overline{T_{i,j}}$, then $\deg(f|_{C_{1}})\geqslant n_{i,j}+1$ and $\deg(f|_{C_{L}})\geqslant 1+d_{i,j}$ lead to a contradiction as soon as $L\geqslant 2$.
\end{proof}

As a consequence, the orbit of $A_{i,j}$ is as follows.
$$\begin{array}{rrclcl}
	& A_{i,j} & \subset & V_{i,j} & \text{with} & U_{i,j}\in\Pole \\
	\forall 1\leqslant k<t_{i,j}, & f^{k}(A_{i,j}) & \subset & V_{i,j+k} & \text{with} & U_{i,j+k}\notin\Pole \\
	& f^{t_{i,j}}(A_{i,j}) & = & T_{i,j+t_{i,j}}\subset V_{i,j+t_{i,j}} & \text{with} & U_{i,j+t_{i,j}}\in\Pole \\
	\forall k>t_{i,j}, & f^{k}(A_{i,j}) & = & V_{\infty} && \\
\end{array}$$


\subsection{The Julia set of a McMullen-like mapping}

The following result describes the Julia set of any McMullen-like mapping.

\begin{thm}\label{ThmJuliaComponents}
If $f$ is a McMullen-like mapping of type $(P,\Pole)$ for some pole data $\Pole$ associated to a $\HPcFP$ $P$, then $f$ is a hyperbolic rational map of degree $\deg(f)=\deg(P)+\deg(\Pole)$, the Julia set $\Julia(f)$ of $f$ is disconnected, and the Fatou set $\Fatou(f)$ contains infinitely many connected components which are either simply or doubly connected. Moreover $\Julia(f)$ contains
\begin{itemize}
	\item countably many preimages of $\partial V_{\infty}$ which is a fixed Julia component quasisymetrically equivalent to $\partial U_{\infty}=\Julia(P)$;
	\item countably many Cantor of Jordan curves so that each Jordan curve belongs to a different Julia component;
	\item and, if $P$ is not affine conjugate to $z\mapsto z^{n}$, uncountably many point Julia components which accumulate everywhere on $\Julia(f)$.
\end{itemize}
\end{thm}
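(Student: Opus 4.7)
The argument proceeds in four steps: degree and hyperbolicity, the Fatou-component census, the three Julia-component assertions, and identification of the main obstacle. For the \emph{degree}, I would count $f$-preimages of a generic $z_{0}\in V_{\infty}$: point (i) contributes $n=\deg(P)$ preimages inside $V_{\infty}$; point (iii) contributes $d_{i,j}$ preimages in each trap door $T_{i,j}$; and Lemma \ref{LemDynamicsOnVij} combined with point (ii) shows that no other region contributes, since the annular pieces $A_{i,j}^{\outer}$, $A_{i,j}^{\inner}$ and the polynomial-like $V(U)$ with $U\notin\Pole$ all map into bounded $V(U')$'s. This gives $\deg(f)=n+d$. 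For \emph{hyperbolicity}, condition (iv) ensures that every critical orbit which eventually enters some $V_{i,j}\in\Pole$ is sent into the corresponding trap door and thence to $V_{\infty}$, while any remaining critical point sits in some $V(U)$ with $U\notin\Pole$, tracks the $P$-dynamics by (ii), and converges to a super-attracting cycle of $P$ which is also attracting for $f$. In all cases the critical orbit accumulates on an attracting cycle.

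Next I would \emph{catalogue the Fatou components}: the initial generation is $V_{\infty}$, each simply connected $V(U)$ for $U\notin\Pole$, and for every $U_{i,j}\in\Pole$ the simply connected trap door $T_{i,j}$ together with the three doubly connected annuli $A_{i,j}^{\outer}$, $A_{i,j}$, $A_{i,j}^{\inner}$ of Lemma \ref{LemDynamicsOnVij}. All other components are iterated pullbacks; since $f$ has no critical points outside these designated pieces, a Riemann-Hurwitz computation applied componentwise forces each pullback to be simply or doubly connected. Infiniteness is immediate, for instance by pulling back $A_{i,j}^{\inner}$ along any infinite backward orbit.

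For the \emph{Julia components}, $\partial V_{\infty}$ is a fixed Julia component by (i). The topological conjugacy $\varphi$ upgrades to a quasisymmetric one because both $P|_{\partial U_{\infty}}$ and $f|_{\partial V_{\infty}}$ are expanding by hyperbolicity; a standard argument pulls back an arbitrary Beltrami form to an $f$-invariant one, integrates by the measurable Riemann mapping theorem, and reads off the QS-property. Preimages of $\partial V_{\infty}$ under $f^{k}$ then yield countably many further disjoint Julia components. For the \emph{Cantor of Jordan curves}, I would fix $U_{i,j}\in\Pole$ and iterate the first-return map $f^{p_{i}}|_{A_{i,j}^{\inner}}$: each iteration produces a new annular Fatou pullback nested inside the previous piece, and the limiting set of Jordan curves is Cantor in the transverse direction, each curve forming its own Julia component because successive annuli are separated at every scale; pulling back further yields the countably many such Cantor families. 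Finally, when $P$ is not affine conjugate to $z\mapsto z^{n}$, the set $\Julia(P)$ is not a Jordan curve and possesses a dense set of cut points; transferring these to $\partial V_{\infty}$ via $\varphi$ and iterating $f^{-1}$, one produces a dense tree of ``pinches'' in the scaled-down Julia pieces, and the residual limit set, lying in no previously described component, consists of uncountably many singletons accumulating everywhere on $\Julia(f)$.

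The \emph{main obstacle} is the Cantor-of-Jordan-curves claim: to verify that the nested pullback annuli neither collapse to points nor fill positive-area sets. This reduces to a modulus estimate — the sum of moduli of successive pullbacks must diverge — which I would establish by McMullen's original method in \cite{AutomorphismsRationalMaps}, exploiting multiplicativity of modulus under coverings and the geometric lower bounds guaranteed by the arithmetic condition of Theorem A. A secondary technical point is the rigorous QS-upgrade of $\varphi$, for which I would appeal to the hyperbolic-dynamics framework in \cite{McMullenBook}.
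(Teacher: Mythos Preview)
Your degree computation by preimage counting and your hyperbolicity case analysis are fine and match the paper's in spirit. But the heart of the argument --- the Cantor of Jordan curves --- has a genuine gap. Iterating the first-return map on $A_{i,j}^{\inner}$ alone, as you describe, yields a single nested sequence of annuli with no branching, hence no Cantor set. The correct mechanism (and the one the paper uses) is to take the full closed annulus $\overline{A}=\overline{A_{i,j}^{\outer}}\cup A_{i,j}\cup\overline{A_{i,j}^{\inner}}$ and observe that its $f^{p_{i}}$-preimage inside $\overline{A}$ consists of \emph{two} disjoint sub-annuli, one in $\overline{A_{i,j}^{\outer}}$ and one in $\overline{A_{i,j}^{\inner}}$, separated by the Fatou annulus $A_{i,j}$; it is this 2-to-1 branching that produces the product structure $\Sigma_{2}\times\partial\D$. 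Relatedly, your Fatou census is off: $A_{i,j}^{\outer}$ and $A_{i,j}^{\inner}$ are not Fatou components at all --- they are precisely where the Cantor of Julia curves lives.

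Your identification of the ``main obstacle'' is also misplaced. The arithmetic condition (\ref{EqArithmeticCondition}) plays no role in Theorem~\ref{ThmJuliaComponents}: once $f$ is assumed to be a McMullen-like mapping it is hyperbolic, and hyperbolicity alone forces the nested annuli to shrink to Jordan curves rather than to points or to positive-area sets. The paper uses no modulus estimate here; instead it appeals to the surgery arguments of \cite{AutomorphismsRationalMaps} and to \cite{RationalMapsDisconnectedJuliaSet} to conclude that each buried curve is an entire Julia component. The arithmetic condition enters only in Theorem~A, which concerns the \emph{existence} of such an $f$.
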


There is actually much more structure in the Julia set $\Julia(f)$. Indeed, every ``unburied'' Jordan curve is eventually mapped under iteration onto a proper subset of $\partial V_{\infty}$. Therefore every Julia component which contains such a Jordan curve is actually quasisymetrically equivalent to a finite covering of $\partial U_{\infty}=\Julia(P)$, and hence comes with infinitely many ``decorations'' attached to the Jordan curve, provided $P$ is not affine conjugate to $z\mapsto z^{n}$ (this structure has already been noticed in \cite{SingularPerturbations} and \cite{SingularPerturbationsQuadraticMultiplePoles}). We will see in the proof below that except for the preimages of $\partial V_{\infty}$, all others Julia components are either points or Jordan curves (in particular, every buried Julia component is either a point or a Jordan curve, compare with \cite{FamilyBuriedJuliaComponents}), that provides a complete topological description of all Julia components.

\begin{proof}
We firstly compute the degree of the rational map $f$. The Riemann-Hurwitz formula gives $2\deg(f)-2=\nu(f)$ where $\nu(f)$ denotes the number of critical points of $f$ counted with multiplicity. From the definition of a McMullen-like mapping (Definition \ref{DefMcMullenLikeMap}), we have
$$\nu(f)=\nu(f|_{V_{\infty}})+\nu'(f)+\sum_{U_{i,j}\in\Pole}\nu(f|_{A_{i,j}})+\sum_{U_{i,j}\in\Pole}\nu(f|_{T_{i,j}})$$
where
\begin{itemize}
	\item $\nu(f|_{V_{\infty}})=n-1$ is the number of critical points of $f|_{V_{\infty}}$ counted with multiplicity;
	\item $\nu'(f)$ is the number of critical points of $f$ counted with multiplicity which are neither in $V_{\infty}$ nor in $\bigcup_{U_{i,j}\in\Pole}V_{i,j}$, namely
	$$\nu'(f)=\sum_{U\notin\Pole}(\deg(f|_{V(U)})-1)=\sum_{U\notin\Pole}(\deg(P|_{U})-1)=(n-1)-\sum_{U_{i,j}\in\Pole}(n_{i,j}-1);$$
	\item $\nu(f|_{A_{i,j}})=n_{i,j}+d_{i,j}$ is the number of critical points of $f|_{A_{i,j}}$ counted with multiplicity;
	\item $\nu(f|_{T_{i,j}})=d_{i,j}-1$ is the number of critical points of $f|_{T_{i,j}}$.
\end{itemize}
Putting everything together leads to
$$\deg(f)=\frac{1}{2}(\nu(f)+2)=\frac{1}{2}\left(2n+\sum_{U_{i,j}\in\Pole}2d_{i,j}\right)=n+d=\deg(P)+\deg(\Pole).$$

In order to prove that $f$ is a hyperbolic map we study the orbit of every critical point. Let $c$  be a critical point of $f$. From Definition \ref{DefMcMullenLikeMap} and Lemma \ref{LemPreimageTrapDoor}, one of the following holds:
\begin{itemize}
	\item either $c\in V_{\infty}$ (and hence $t_{c}=+\infty$), where $V_{\infty}$ is a fixed simply connected domain, that corresponds to a fixed immediate attracting or super-attracting basin;
	\item or $c\in T_{i,j}$ for some $U_{i,j}\in\Pole$ (and hence $t_{c}=+\infty$), where $T_{i,j}$ is a simply connected preimage by $f$ of $V_{\infty}$;
	\item or $c\in A_{i,j}$ for some $U_{i,j}\in\Pole$ (and hence $t_{c}=t_{i,j}$), where $A_{i,j}$ is a doubly connected preimage by $f^{t_{i,j}+1}$ of $V_{\infty}$;
		\item or $c\in V(U)$ for some $U\notin\Pole$ and $t_{c}<+\infty$, then $c$ lies in a simply connected preimage by $f^{t_{c}+1}$ of $V_{\infty}$;
		\item or $c\in V(U)$ for some $U\notin\Pole$ and $t_{c}=+\infty$, then $V(U)$ is a simply connected domain which is eventually mapped onto a periodic cycle of simply connected domains of the form $V(U_{i,j})$ with $U_{i,j}\notin\Pole$, that corresponds to a periodic cycle of immediate attracting or super-attracting basins.
\end{itemize}

Therefore the rational map $f$ is hyperbolic, and every Fatou domain is either simply or doubly connected. 

After the description of the Fatou set $\Fatou(f)$ we focus on the Julia set $\Julia(f)$. The Julia set $\Julia(f)$ is disconnected since there is at least one doubly connected Fatou domain. From the first property  in the definition of a McMullen-like mapping (Definition \ref{DefMcMullenLikeMap}), $\partial V_{\infty}$ is a fixed Julia component homeomorphic to $\partial U_{\infty}$. Using the surgery procedure described in \cite{AutomorphismsRationalMaps} (Sections 5 and  6), we can extend the homeomorphism to a quasiconformal conjugation on a neighborhood of $\partial U_{\infty}$, and hence $\partial V_{\infty}$ is quasisymetrically equivalent to $\partial U_{\infty}=\Julia(P)$.

Fix $U_{i,j}\in\Pole$. Consider the orbit of the closed annulus $\overline{A}=\overline{A_{i,j}^{\outer}}\cup A_{i,j}\cup\overline{A_{i,j}^{\inner}}$. From Definition \ref{DefMcMullenLikeMap} and Lemma \ref{LemDynamicsOnVij}, it turns out that $\overline{f^{p_{i}}(A)}$ covers $V_{i,j}$. The preimage of $\overline{A}$ by $f^{p_{i}}$ contains two disjoint closed annuli both nested in $\overline{A}$ (more precisely, they are nested in $\overline{A_{i,j}^{\outer}}$ and $\overline{A_{i,j}^{\inner}}$ respectively) which do not contain critical points of $f^{p_{i}}$. It is then straightforward to show (see \cite{AutomorphismsRationalMaps} or \cite{Trichotomy}) that the non-escaping set $\overline{\bigcap_{k\geqslant 1}(f^{p_{i}})^{-k}(A)}\subset\Julia(f)$ is homeomorphic to a Cantor of Jordan curves, namely to $\Sigma_{2}\times\partial\D$ where $\Sigma_{2}=\{0,1\}^{\N}$ is the set of all one-sided sequences on two symbols. Moreover, the topological dynamics is conjugate to the following skew product
$$(\varepsilon_{0}\varepsilon_{1}\varepsilon_{2}\dots,z)\in\Sigma_{2}\times\partial\D\mapsto\left\{\begin{array}{rcl}
	\phantom{\sigma(}(\varepsilon_{1}\varepsilon_{2}\varepsilon_{3}\dots\phantom{)},z^{n_{i,j}})^{\phantom{-}} & \text{if} & \varepsilon_{0}=1 \\
	(\sigma(\varepsilon_{1}\varepsilon_{2}\varepsilon_{3}\dots),z^{-d_{i,j}}) & \text{if} & \varepsilon_{0}=0 \\
\end{array}\right.$$
where
$$\sigma(\varepsilon_{1}\varepsilon_{2}\varepsilon_{3}\dots)=(1-\varepsilon_{1})(1-\varepsilon_{2})(1-\varepsilon_{3})\dots.$$

Notice that every copy of $\partial\D$ which is eventually mapped under iteration by the 2-to-1 map $\sigma$ to the fixed copy coded by $11111\dots$, corresponds to a Jordan curve in $\Julia(f)$ which is eventually mapped under iteration by $f^{p_{i}}$ to $\partial V_{i,j}\subset\partial V_{\infty}$. Therefore, every Julia component which contains such a Jordan curve is actually a preimage of $\partial V_{\infty}$. The others copies correspond to buried Jordan curves, and are either preperiodic or wandering. In the first case (for instance the fixed copy coded by $01010\dots$), the surgery procedure
described in \cite{AutomorphismsRationalMaps} ( Sections 5 and 6)  shows that the Jordan curve is the whole Julia component. The same holds in the second case according to the main result in \cite{RationalMapsDisconnectedJuliaSet}.

Now if $P$ is not affine conjugate to $z\mapsto z^{n}$, every preimage of $\partial V_{\infty}$ comes with infinitely many ``decorations''. In particular, $V_{i,j}$ contains some disjoint closed disks which are preimages by $f^{p_{i}}$ of the whole closed disks $\overline{V_{i,j}}$. Repeating this reasoning gives uncountably many sequences of nested closed disks. It is then straightforward to show (see \cite{SingularPerturbations} or \cite{SingularPerturbationsQuadraticMultiplePoles}) that the intersection of such a nested sequence is actually a point connected component of $\Julia(f)$. The grand orbit of such a point consists of point Julia components as well, and it accumulates everywhere on $\Julia(f)$.
\end{proof}

\begin{thm}\label{ThmJuliaHomeomorphic}
Two McMullen-like mappings are topologically conjugate on their Julia sets (by an orientation preserving homeomorphism) if and only if they have same type $(P,\Pole)$ up to conjugation by an affine map.
\end{thm}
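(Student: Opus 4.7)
The plan is to handle the two implications separately, relying on Theorem \ref{ThmJuliaComponents} to identify topological invariants on each side. For the ``if'' direction, after replacing $(P_2,\Pole_2)$ by its affine conjugate one may assume $P_1=P_2=P$ and $\Pole_1=\Pole_2=\Pole$. By Theorem \ref{ThmJuliaComponents}, both $\Julia(f_1)$ and $\Julia(f_2)$ decompose into three families of components: the iterated preimages of $\partial V_{\infty}$ (each quasisymetrically equivalent to a finite covering of $\Julia(P)$), Cantor sets of Jordan curves governed by the common skew product model indexed by $U_{i,j}\in\Pole$ with parameters $(n_{i,j},d_{i,j})$, and possibly point components. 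I would first pick an orientation preserving topological conjugacy $h_0:\partial V_{\infty}^{(1)}\to\partial V_{\infty}^{(2)}$ between $f_1|_{\partial V_{\infty}^{(1)}}$ and $f_2|_{\partial V_{\infty}^{(2)}}$; such an $h_0$ exists because both restrictions are topologically conjugate to $P|_{\Julia(P)}$. Then I would propagate $h_0$ through the dynamics to every iterated preimage of $\partial V_{\infty}$ by lifting consistently through the covering structure. On each Cantor of Jordan curves the conjugacy is defined through the canonical symbolic model $\Sigma_{2}\times\partial\D$ recalled in the proof of Theorem \ref{ThmJuliaComponents}. Finally, the point components extend uniquely by continuity since they accumulate on the preceding pieces in a way entirely determined by $(P,\Pole)$.

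For the ``only if'' direction, let $h:\Julia(f_1)\to\Julia(f_2)$ be an orientation preserving topological conjugacy. The crucial first step is to show that $h$ sends $\partial V_{\infty}^{(1)}$ onto $\partial V_{\infty}^{(2)}$. The component $\partial V_{\infty}$ is a fixed Julia component of $f$; the only other fixed Julia components (when they exist) are the buried Jordan curves arising in the Cantor families attached to some fixed $U_{i,j}\in\Pole$. When $P$ is not affine conjugate to $z\mapsto z^{n}$ these buried components are Jordan curves while $\partial V_{\infty}$ is not, so the identification is automatic. In the case $P(z)=z^{n}$, I would distinguish $\partial V_{\infty}$ as the unique fixed Julia component onto which the iterated preimages of itself accumulate, a property that fails for the buried fixed Jordan curves (which are only accumulated by the Cantor structure of their own family). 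Once $h(\partial V_{\infty}^{(1)})=\partial V_{\infty}^{(2)}$ is established, $h$ restricts to an orientation preserving topological conjugacy between $P_{1}|_{\Julia(P_1)}$ and $P_{2}|_{\Julia(P_2)}$, and the classical rigidity of hyperbolic postcritically finite polynomials (a consequence of Thurston's theorem, see \cite{ThurstonProof,McMullenBook}) implies that $P_{1}$ and $P_{2}$ are affinely conjugate.

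It then remains to recover the pole data. Under the identification afforded by $h$ combined with the affine conjugacy between $P_{1}$ and $P_{2}$, the boundary $\partial V_{i,j}^{(1)}$ corresponds to $\partial V_{i,j}^{(2)}$ for each indexed bounded Fatou domain. I would characterize $U_{i,j}\in\Pole$ topologically as those indices for which a Cantor of Jordan curve Julia components accumulates onto $\partial V_{i,j}$ from the bounded side, a property preserved by $h$. The integer $d_{i,j}$ is then recovered as the topological degree of $f^{p_{i}}$ on the buried fixed Jordan curve coded by $0101\ldots$ in the associated skew product model (since $\sigma(1010\ldots)=0101\ldots$, this is indeed a fixed curve of $f^{p_{i}}$ on which the model acts as $z\mapsto z^{-d_{i,j}}$), a quantity invariant under topological conjugacy. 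This matches $\Pole_{1}$ with $\Pole_{2}$ under the affine map. The hardest step of the argument will be the distinction of $\partial V_{\infty}$ from the buried fixed Jordan curves in the degenerate case $P(z)=z^{n}$, for which the accumulation structure described in Theorem \ref{ThmJuliaComponents} must be used with care.
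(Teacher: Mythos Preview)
Your proposal is considerably more detailed than the paper's own proof, which is extremely terse: it only explicitly argues the ``only if'' implication and then asserts that ``the combinatorial description of all Julia components in the proof of Theorem \ref{ThmJuliaComponents} concludes the proof''. Your outline has the right architecture, but there is a genuine gap in the step you yourself flagged as the hardest.

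In the case $P(z)=z^{n}$, your characterization of $\partial V_{\infty}$ as ``the unique fixed Julia component onto which the iterated preimages of itself accumulate'' does not separate it from the buried fixed Jordan curve. The backward orbit of \emph{any} Julia component is dense in $\Julia(f)$, so the iterated preimages of the buried fixed circle (coded $0101\dots$ in the skew product model) also accumulate on every Julia component, including that circle itself. A correct replacement is \emph{non-burial}: $\partial V_{\infty}$ is the unique fixed Julia component that lies on the boundary of a Fatou domain (namely $V_{\infty}$), whereas the curve coded $0101\dots$ is buried. Since an orientation preserving homeomorphism $h:\CC\to\CC$ sends complementary components of $\Julia(f_{1})$ to complementary components of $\Julia(f_{2})$, it preserves the buried/unburied dichotomy, and $h(\partial V_{\infty}^{(1)})=\partial V_{\infty}^{(2)}$ follows. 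The paper's phrasing (``only one Fatou domain which contains the image of every trap door'') gestures at the same identification via the Fatou side but leaves the passage from the Julia conjugacy to this Fatou characterization unjustified; your argument, once corrected as above, actually makes the step rigorous.

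A secondary remark on the ``if'' direction: assembling the conjugacy piecewise --- on preimages of $\partial V_{\infty}$, on each Cantor of Jordan curves via the symbolic model, and then on point components ``by continuity'' --- is more delicate than it sounds, because these families interleave and one must check that the partially defined maps match up continuously on their common accumulation sets. A cleaner alternative is to observe that two hyperbolic rational maps sharing the combinatorial model $(P,\Pole)$ lie in a common hyperbolic component and are therefore globally quasiconformally conjugate; restricting to the Julia set then gives the desired orientation preserving topological conjugacy without any gluing.
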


\begin{proof}
Let $f_{1}$ and $f_{2}$ be two McMullen-like mappings, of type $(P_{1},\Pole_{1})$ and $(P_{2},\Pole_{2})$ respectively, which are topologically conjugate on their Julia sets. Since they both have only one Fatou domain which contains the image of every trap door, it follows from the first property of the definition of a McMullen-like mapping (Definition \ref{DefMcMullenLikeMap}) that $P_{1}$ and $P_{2}$ are topologically conjugate on their Julia sets, and hence are affine conjugate as $\HPcFP$. It is straightforward to see that the combinatorial description of all Julia components in the proof of Theorem \ref{ThmJuliaComponents} concludes the proof.
\end{proof}


\subsection{Known examples}

In this section we show some known examples of McMullen-like mappings from the literature. For each example we focus in the pair $(P,\mathcal  D)$ where $P$ is a $\HPcFP$ and  $\Pole$ is the pole data  in the definition of a McMullen-like mapping (see Definition \ref{DefMcMullenLikeMap}).

The first example is the McMullen family given by $f_{\lambda}(z)=z^n+\lambda/z^d$. This rational map has $\infty$ as a super-attracting fixed point  and the only finite preimage of $\infty$ is the origin. Thus if $V_{\infty}$ is the immediate basin of attraction of $\infty$ and when $V_{\infty}$  does not contain the origin, then there exists a unique trap door $T_0$, a neighborhood of the origin,  that is mapped $d$-to-1 onto $V_{\infty}$. We also recall that  $f_{\lambda}$ has $n+d$ ``free'' critical points given by $c_{\lambda}=(\frac{\lambda d}{n})^{1/(n+d)}$ and the corresponding free critical values are $v_{\lambda}=f_{\lambda}(c_{\lambda})$. In Figure \ref{fig:mcmullen_ex1} we show the dynamical plane of the McMullen-like mapping $z^3-0.01/z^3$.

In this case the polynomial $z\mapsto z^n$ is a $\HPcFP$, this is  a singular case since the Fatou set $\Fatou(P)$ only contains  two simply connected components: the immediate basin of attraction of $\infty$ and the immediate basin of attraction of $0$.  Thus there is a unique bounded Fatou domain which coincides with the unit disk and  denoted by $U_{0}$. The pole data $\Pole$ is formed by $U_{0}$ and a positive integer $d=d_{0}\geqslant 1$. From \cite{AutomorphismsRationalMaps} and \cite{SingularPerturbations} we conclude the following result. 

\begin{pro}\label{ex1:McMullen}
Let $f_{\lambda}(z)=z^n + \lambda/z^d$ such that the free critical values $v_{\lambda}$ belong to the trap door $T_0$ and the arithmetic condition $1/n+1/d<1$ is satisfied,  then $f_{\lambda}$ is a McMullen-like mapping.
\end{pro}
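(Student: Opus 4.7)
The plan is to verify all four conditions of Definition \ref{DefMcMullenLikeMap} for the pair $(P,\Pole)$ in which $P(z)=z^{n}$ and $\Pole=\{U_{0}\}$ carries the integer $d_{0}=d$. The polynomial $P$ is clearly a $\HPcFP$: both critical points $0$ and $\infty$ are super-attracting fixed points, $U_{\infty}=\{|z|>1\}$, and the only bounded Fatou domain is $U_{0}=\D$, periodic of period $p_{1}=1$ with local degree $n_{1,0}=n$. In particular $\deg(P)+\deg(\Pole)=n+d=\deg(f_{\lambda})$, consistent with Theorem \ref{ThmJuliaComponents}.

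For condition \textbf{(i)}, I would take $V_{\infty}$ to be the immediate basin of $\infty$ for $f_{\lambda}$. Under the standing hypotheses every free critical orbit passes through $T_{0}$ and then escapes to $\infty$, so $f_{\lambda}$ is hyperbolic and $\partial V_{\infty}$ is a Jordan curve (in fact a quasicircle). The Riemann-Hurwitz formula, applied to $f_{\lambda}|_{V_{\infty}}$ whose only critical point is the fixed point $\infty$ of multiplicity $n-1$, gives $\deg(f_{\lambda}|_{V_{\infty}})=n$; any orientation-preserving degree-$n$ self-covering of a topological circle is topologically conjugate to $z\mapsto z^{n}$ on $\partial U_{\infty}=\partial\D$, so (i) holds. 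Condition \textbf{(ii)} is vacuous because the unique bounded Fatou domain of $P$ is already the pole carrier $U_{0}\in\Pole$.

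The substantive step is \textbf{(iii)}. I would set $T_{0}$ to be the connected component of $f_{\lambda}^{-1}(V_{\infty})$ containing the pole $0$; it is simply connected, $f_{\lambda}|_{T_{0}}$ has degree $d$, and $T_{0}\subset V_{0}:=V(U_{0})$. For the annulus, I would take $A$ to be the Fatou component that contains the $n+d$ free critical points. That $A$ is a doubly connected domain separating $\partial V_{\infty}$ from $\overline{T_{0}}$, and that $f_{\lambda}|_{A}$ is a proper degree-$(n+d)$ map onto a topological disk $f_{\lambda}(A)\subset V_{0}$ containing $T_{0}$, is exactly the content of McMullen's theorem from \cite{AutomorphismsRationalMaps}; the arithmetic condition $1/n+1/d<1$ is the precise modulus inequality which prevents $A$ from collapsing onto $\partial V_{\infty}$ or $\partial T_{0}$. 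The last bullet of (iii) is then a critical-point count: the total multiplicity of critical points of $f_{\lambda}$ is $(n-1)+(d-1)+(n+d)=2(n+d)-2=2\deg(f_{\lambda})-2$, all accounted for inside $V_{\infty}\cup T_{0}\cup A$, so $\overline{V_{0}}\setminus(A\cup T_{0})$ is free of critical points.

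Condition \textbf{(iv)} is then immediate: the critical points $\infty$ and $0$ have forward orbits permanently at $\infty$, so $t_{c}=+\infty$ and the condition is vacuous for them; each free critical point $c_{\lambda}\in A\subset V_{0}$ satisfies $f_{\lambda}(c_{\lambda})=v_{\lambda}\in T_{0}$ by hypothesis, giving $t_{c_{\lambda}}=1$ and $f_{\lambda}^{t_{c_{\lambda}}}(c_{\lambda})\in T_{0}$. The main obstacle to a self-contained proof is the annulus claim in (iii), which is a genuinely analytic statement requiring a modulus estimate showing $A$ has positive modulus exactly when $1/n+1/d<1$; I would rely on \cite{AutomorphismsRationalMaps,Trichotomy} for this step rather than reprove it here.
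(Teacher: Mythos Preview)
Your verification of Definition~\ref{DefMcMullenLikeMap} is correct and is essentially the argument the paper has in mind: the paper itself gives no proof of this proposition, simply deferring to \cite{AutomorphismsRationalMaps} and \cite{SingularPerturbations}, and your write-up spells out how those references feed into each clause of the definition.

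One small point of clarification. Once the standing hypothesis ``$v_{\lambda}\in T_{0}$'' is in force, the fact that the Fatou component $A$ containing the free critical points is a genuine annulus separating $\partial V_{\infty}$ from $\overline{T_{0}}$ follows from the Riemann--Hurwitz count you already did together with the $(n+d)$-fold rotational symmetry of $f_{\lambda}$ (so $f_{\lambda}^{-1}(T_{0})$ is connected and has Euler characteristic $0$); it does not require a modulus estimate. The inequality $1/n+1/d<1$ plays a different role: it is precisely what guarantees that some parameter $\lambda$ with $v_{\lambda}\in T_{0}$ exists at all (equivalently, by the necessity part of Theorem~A via the Thurston obstruction, any map satisfying \textbf{(i)}--\textbf{(iv)} must obey it). So your sentence about the arithmetic condition ``preventing $A$ from collapsing'' mislocates where the inequality enters, though it does not affect the correctness of the verification. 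Also note that with your choice of $A$ one has $f_{\lambda}(A)=T_{0}$ exactly, not merely a disk containing $T_{0}$.
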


The next two examples of McMullen-like mappings are related to the polynomial $P_{c}(z)=z^n+c$ where $c$ is a center of a hyperbolic component of the corresponding Multibrot set. The choice of the parameter $c$ ensures that  $P_{c}$ is $\HPcFP$ since the orbit of the critical point located at $0$ is a periodic orbit of period $p$. We denote by $U_{0},U_{1},\cdots,U_{p-1}$ the Fatou domain containing $0,P_{c}(0),\cdots,P_{c}^{p-1}(0)$, respectively.  

The first McMullen-like mapping related to $P_{c}$ was introduced in \cite{GeneralizedMcMullenDomain} where the authors studied the family of rational maps $g_{\lambda}(z)=z^n+c+\lambda/z^n$. In this case the pole data is formed by the Fatou domain $U_{0}$ and the positive integer $n$, making thus the first generalization of the McMullen family. In Figure \ref{fig:mcmullen_ex1} we show the dynamical plane of the McMullen-like mapping $z^3+i -10^{-7}/z^3$. In \cite{GeneralizedMcMullenDomain} the authors proved the following result.

\begin{pro}\label{ex2:GeneralizedMcMullen}
Let $g_{\lambda}(z)=z^n+c+\lambda/z^n$. For small enough values of  $|\lambda|\neq 0$ and when the arithmetic condition $n>2$ is satisfied, the rational map $g_{\lambda}$ is a McMullen-like mapping.
\end{pro}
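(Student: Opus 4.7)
The plan is to verify, for $|\lambda|$ sufficiently small, each of the four conditions of Definition \ref{DefMcMullenLikeMap} for the pair $(P_c,\Pole)$ with $\Pole$ consisting of $U_0$ carrying the integer $d_0=n$. Throughout I treat $g_\lambda$ as a singular perturbation of $P_c$: on any compact set avoiding $0$, $g_\lambda\to P_c$ uniformly as $\lambda\to 0$, so the dynamics of $g_\lambda$ is controlled by $P_c$ away from the pole.

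For condition (i), the restriction of $g_\lambda$ to a suitable topological disk containing $\infty$ is, for small $|\lambda|$, a polynomial-like map of degree $n$ hybrid equivalent to $P_c$, which yields the invariant simply connected domain $V_\infty$ with $\partial V_\infty$ quasi-symmetrically conjugate to $\partial U_\infty=\Julia(P_c)$. For condition (ii), each $V(U)$ with $U\notin\Pole$ is obtained as a standard perturbation of $U$ in a region where $g_\lambda$ is uniformly close to $P_c$, so $g_\lambda$ maps $V(U)$ onto $V(P_c(U))$ with the same degree as $P_c|_U$. The core of the argument is condition (iii). A direct computation identifies the critical set of $g_\lambda$ as $\infty$ and $0$ (each of multiplicity $n-1$), together with $2n$ free critical points $c_k=\lambda^{1/(2n)}\zeta_{2n}^k$ whose critical values collapse to $v_\lambda=c\pm 2\sqrt{\lambda}$. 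The argument principle applied near the pole produces a simply connected neighborhood $T_0\subset V_0$ of $0$ mapped by $g_\lambda$ with degree $n$ onto $V_\infty$; this is the trap door. Following Lemma \ref{LemPreimageTrapDoor}, I would define $A_0\subset V_0\setminus\overline{T_0}$ as the connected component containing the free critical points of $g_\lambda^{-1}\bigl((g_\lambda^{p-1}|_{V_1})^{-1}(T_0)\bigr)$; the simple-connectedness of the inner pullback relies on condition (iv) below. The Riemann--Hurwitz accounting of Lemma \ref{LemDynamicsOnVij}, with $\deg(g_\lambda|_{\partial V_0})=\deg(g_\lambda|_{T_0})=n$ and exactly $2n$ free critical points, then forces $A_0$ to be a doubly connected domain separating $\partial V_0$ from $\overline{T_0}$ with no further critical points in $\overline{V_0}\setminus(A_0\cup T_0)$. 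For condition (iv), since $v_\lambda\to c$ as $\lambda\to 0$ and $P_c^{p-1}(c)=0$, uniform continuity of $g_\lambda$ on compacta avoiding $0$ gives $g_\lambda^{p}(c_k)=g_\lambda^{p-1}(v_\lambda)\to 0$, so $g_\lambda^{p}(c_k)\in T_0$ once $|\lambda|$ is small enough; thus $t_{c_k}=p$ and $g_\lambda^{t_{c_k}}(c_k)\in T_0$.

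The arithmetic assumption $n>2$ enters as the inequality $1/n+1/n<1$ corresponding to local degree $n_0=n$ and pole order $d_0=n$: this is precisely what is needed for the two annuli $\overline{A_0^{\outer}}$ and $\overline{A_0^{\inner}}$ of Lemma \ref{LemDynamicsOnVij} to carry positive modulus in the limit, so that $\partial V_0$, $A_0$, and $T_0$ remain genuinely nested rather than collapsing as $\lambda\to 0$. The main obstacle is the circular interdependence in the construction of $A_0$: it requires the pullback of $T_0$ along $g_\lambda^{p-1}|_{V_1}$ to be simply connected, which requires the free critical values to land in $T_0$ (condition (iv)), while (iv) itself is most cleanly phrased once $T_0$ and the dynamics on $V_0$ are already described. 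Resolving this loop amounts to working uniformly in $|\lambda|$ small and invoking the perturbative estimates of \cite{GeneralizedMcMullenDomain}.
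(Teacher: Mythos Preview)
The paper does not actually give its own proof of this proposition: it simply records the statement and attributes it to \cite{GeneralizedMcMullenDomain}. So there is no in-paper argument to compare your sketch against line by line.

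That said, your outline is sound and parallels closely what the paper \emph{does} prove in detail for the degree-$4$ example $q_\lambda(z)=2z^3-3z^2+1+\lambda/z$ (Proposition~\ref{pro:new1}). The one methodological difference worth flagging is in your treatment of conditions \textbf{(i)}--\textbf{(ii)}: you invoke a polynomial-like restriction of $g_\lambda$ near $\infty$ and straightening to get the conjugacy on $\partial V_\infty$, whereas the paper (for $q_\lambda$) builds a holomorphic motion of $U_\infty$ via B\"ottcher coordinates and then applies the $\Lambda$-Lemma of Ma\~n\'e--Sad--Sullivan to extend to $\Julia(P_c)=\partial U_\infty$. Both routes yield the required topological conjugacy on the Julia boundary; the holomorphic-motion argument has the mild advantage that it directly produces the conjugacy on \emph{all} of $\partial V_\infty$ (hence on every $\partial V(U)$ simultaneously), which makes \textbf{(ii)} immediate, while the polynomial-like route needs an extra word about why the straightening conjugacy extends to the full boundary and respects the labelling of bounded components.

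One point in your sketch deserves sharpening. You say the hypothesis $n>2$ is ``what is needed for $\overline{A_0^{\outer}}$ and $\overline{A_0^{\inner}}$ to carry positive modulus in the limit''. That is the right intuition but not where the condition bites operationally in this family. Concretely, the trap door has radius of order $|\lambda|^{1/n}$, while $g_\lambda^{\,p}(c_k)$ differs from $0$ by a quantity of order $|\lambda|^{1/2}$ (since $v_\lambda=c\pm 2\sqrt{\lambda}$ and the subsequent $p-1$ iterates are along noncritical points of $P_c$). The inclusion $g_\lambda^{\,p}(c_k)\in T_0$ required in \textbf{(iv)} therefore needs $|\lambda|^{1/2}\ll |\lambda|^{1/n}$, i.e.\ $n>2$. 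This is exactly the ``perturbative estimate'' you defer to \cite{GeneralizedMcMullenDomain}, and making it explicit also dissolves the circularity you identify between \textbf{(iii)} and \textbf{(iv)}: one first proves \textbf{(iv)} by the rate comparison, and only then pulls back $T_0$ to build $A_0$.
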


The second McMullen-like mapping related to the  $P_{c}(z)=z^n+c$ was introduced in \cite{SingularPerturbationsQuadraticMultiplePoles} where the authors studied the family of rational maps $h_{\lambda}(z)=z^n+c+\lambda/\prod_{j=0}^{p-1}(z-c_{j})^{d_{j}}$, where $c_{j}=P_{c}^{j}(0)$ for $0\leqslant j\leqslant p-1$. In this case  $h_{\lambda}$ has a  pole  in every point of the super-attracting orbit $0\to P_{c}(0)\to\cdots P_{c}^{p-1}(0)$ of order $d_{j}$, for $0\leqslant j\leqslant p-1$.  The pole data $\Pole$ is formed by the Fatou domains $\{U_{0},U_{1},\cdots,U_{p-1}\}$, and the corresponding positive integers $d_{0},d_{1},\cdots,d_{p-1}$. In Figure \ref{fig:mcmullen_ex1} we show the dynamical plane of the McMullen-like mapping $z^2-1+10^{-22}/(z^7(z+1)^5)$. In  \cite{SingularPerturbationsQuadraticMultiplePoles}  it is proved that

\begin{pro}\label{ex3:Multipoles}
Let $h_{\lambda}(z)=z^n+c+\lambda/\prod_{j=0}^{p-1}(z-c_{j})^{d_{j}}$. For small enough values of $|\lambda|\neq 0$ and when the arithmetic conditions $2d_{1}>d_{0}+2$ and $d_{j+1}>d_{j}+1$ for every $1\leqslant j\leqslant p-1$ (with $d_{p}=d_{0}$) are satisfied, the rational map $h_{\lambda}$ is a McMullen-like mapping.
\end{pro}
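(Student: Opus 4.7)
The plan is to verify directly the four conditions of Definition \ref{DefMcMullenLikeMap} for $h_\lambda$; in broad strokes this is the content of \cite{SingularPerturbationsQuadraticMultiplePoles}, but the skeleton of the argument is as follows. Outside a fixed small neighborhood of the pole set $\{c_0,\dots,c_{p-1}\}$, the map $h_\lambda$ converges uniformly to $P_c$ as $\lambda\to 0$. Standard singular-perturbation arguments (compare \cite{Trichotomy}) then produce a simply connected immediate basin $V_\infty$ of $\infty$ whose boundary dynamics is quasisymmetrically conjugate to $P_c|_{\partial U_\infty}$, giving condition (i); the same perturbative reasoning applied Fatou-domain by Fatou-domain to each bounded $U\notin\Pole$ yields $h_\lambda(V(U))=V(P_c(U))$, giving condition (ii).

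The substantive work lies in condition (iii). For each $U_j\in\Pole$, a local expansion
\[
h_\lambda(z)=c_{j+1}+\frac{\lambda}{\alpha_j\,(z-c_j)^{d_j}}+O(z-c_j),\qquad \alpha_j=\prod_{k\neq j}(c_j-c_k)^{d_k},
\]
shows that the preimage of $V_\infty$ near $c_j$ is a topological disk $T_j$ of ``radius'' $\asymp|\lambda|^{1/d_j}$ mapped with degree $d_j$ onto $V_\infty$. Solving $h_\lambda'(z)=0$ via Rouch\'e's theorem produces $n+d_0$ free critical points near $c_0=0$ at distance $\asymp|\lambda|^{1/(n+d_0)}$, and $d_j+1$ free critical points near each other $c_j$ at distance $\asymp|\lambda|^{1/(d_j+1)}$; the total matches the Riemann-Hurwitz count prescribed by Lemma \ref{LemDynamicsOnVij}. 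The annulus $A_j$ is taken as the component of $V_j\setminus\overline{T_j}$ separating $\partial V_j$ from $\overline{T_j}$ that contains the free critical locus; Lemma \ref{LemPreimageTrapDoor} then lets me upgrade it so that $h_\lambda^{\,t_j}(A_j)=T_{j+t_j}$, and Lemma \ref{LemDynamicsOnVij} delivers the required proper-map behavior.

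The arithmetic conditions enter only through condition (iv). Substituting the critical-point equation back into $h_\lambda$ shows that the critical values cluster at distance $\asymp|\lambda|^{n/(n+d_0)}$ from $c_1$ (for critical points near $c_0$) and at distance $\asymp|\lambda|^{1/(d_j+1)}$ from $c_{j+1}$ (for $j\geqslant 1$). Since $T_{j+1}$ has ``radius'' $\asymp|\lambda|^{1/d_{j+1}}$, for the critical values to lie in $T_{j+1}$ as $\lambda\to 0$ we need
\[
\frac{n}{n+d_0}>\frac{1}{d_1}\quad\text{and}\quad\frac{1}{d_j+1}>\frac{1}{d_{j+1}}\ \ (1\leqslant j\leqslant p-1),
\]
which, specialized to $n=2$, are exactly the inequalities $2d_1>d_0+2$ and $d_{j+1}>d_j+1$ with the cyclic convention $d_p=d_0$.

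The main obstacle is promoting these asymptotic estimates to genuine inclusions of critical values inside trap doors for all sufficiently small $|\lambda|$: each $\asymp$ hides a multiplicative constant depending on the geometry of $P_c$ and the configuration $(c_k)$, so one needs quantitative versions of Rouch\'e's theorem and a priori size bounds for $T_{j+1}$, all uniform in $\lambda$. Once this uniform control is secured, conditions (i)-(iv) are verified and $h_\lambda$ is McMullen-like of type $(P_c,\Pole)$, as claimed.
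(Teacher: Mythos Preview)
The paper does not actually prove this proposition: it is stated in the ``Known examples'' subsection with the sentence ``In \cite{SingularPerturbationsQuadraticMultiplePoles} it is proved that\dots'' and no further argument. Your outline is therefore not competing with any proof in the paper, and it is a faithful sketch of the singular-perturbation argument one expects from the cited reference; the asymptotic scales you compute for the trap doors, the free critical points, and the critical values are correct, and your identification of the arithmetic inequalities with the inclusion $f(\text{critical points near }c_j)\subset T_{j+1}$ is exactly the mechanism.

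Two small points are worth tightening. First, your appeal to Lemma~\ref{LemPreimageTrapDoor} to ``upgrade'' $A_j$ is circular: that lemma is proved under the hypothesis that $f$ is already a McMullen-like mapping, which is what you are trying to establish. The clean fix is to \emph{define} $A_j$ directly as the connected component of $h_\lambda^{-1}(T_{j+1})$ lying in $V_j\setminus\overline{T_j}$ and then check via Riemann--Hurwitz (using that all $n_{i,j}+d_{i,j}$ nearby critical points land in $T_{j+1}$) that this component is a single annulus separating $\partial V_j$ from $\overline{T_j}$; this is the same computation as in the proof of Lemma~\ref{LemPreimageTrapDoor}, just run forward rather than invoked as a black box. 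Second, you correctly note that the stated inequality $2d_1>d_0+2$ is the specialization $n=2$ of the general condition $nd_1>n+d_0$; this is consistent with the cited source being about quadratic polynomials, and the paper's later discussion in Section~\ref{sec:new_examples} also silently takes $n=2$ when rewriting the arithmetic condition as $(\tfrac{1}{2}+\tfrac{1}{d_0})\prod_{j\geqslant 1}(1+\tfrac{1}{d_j})<1$.
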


We notice that in these three examples, one arithmetic condition is required to ensure that the the corresponding rational map is a McMullen-like mapping. In the next section we state the main result of this paper that characterize a McMullen-like mapping of type $(P,\Pole)$ using an arithmetic condition.


\section{Existence of McMullen-like mappings}\label{sec:existence}

\subsection{The arithmetic condition: Theorem A}

\begin{thmA}
Let $\Pole$ be a pole data associated to a $\HPcFP$ $P$. Then there exists a McMullen-like mapping of type $(P,\Pole)$ if and only if the following arithmetic condition holds.
\begin{equation}\label{EqArithmeticCondition}
	\max_{1\leqslant i\leqslant N}\left\{\prod_{\substack{j\in\Z/p_{i}\Z \\ U_{i,j}\notin\Pole}}\frac{1}{n_{i,j}}\times\prod_{\substack{j\in\Z/p_{i}\Z \\ U_{i,j}\in\Pole}}\left(\frac{1}{n_{i,j}}+\frac{1}{d_{i,j}}\right)\right\}<1
	\tag{$\star$}
\end{equation}
\end{thmA}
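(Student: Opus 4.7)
The theorem is an ``if and only if'' about a rational-map realization problem, and the two implications have opposite flavors: necessity will follow from dynamical constraints encoded in a Thurston-type transition matrix, while sufficiency will require a constructive quasiconformal surgery starting from $P$.

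\textbf{Necessity direction.} I would assume $f$ is a McMullen-like mapping of type $(P,\Pole)$ and, for each cycle $i$ that meets $\Pole$, build a multicurve $\Gamma_{i} = \{\gamma_{i,j}\}_{j \in \Z/p_{i}\Z}$ as follows: take $\gamma_{i,j}$ to be the core curve of $A_{i,j}$ when $U_{i,j} \in \Pole$, and to be the unique essential lift of $\gamma_{i,j+1}$ under $f|_{V_{i,j}}$ when $U_{i,j} \notin \Pole$. Uniqueness of this lift follows from the proper covering structure in point \textbf{(ii)} of Definition \ref{DefMcMullenLikeMap} and the trap-door refinement provided by Lemma \ref{LemPreimageTrapDoor}. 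Lemma \ref{LemDynamicsOnVij} then dictates the preimage count: in the column indexed by $\gamma_{i,j+1}$, the Thurston transition matrix has exactly one essential preimage in $V_{i,j}$ contributing $1/n_{i,j}$ when $U_{i,j} \notin \Pole$, and two essential preimages isotopic to $\gamma_{i,j}$ contributing $1/n_{i,j}$ (in $A_{i,j}^{\outer}$, where $f$ has degree $n_{i,j}$) and $1/d_{i,j}$ (in $A_{i,j}^{\inner}$, where $f$ has degree $d_{i,j}$) when $U_{i,j} \in \Pole$. The block of the Thurston matrix for cycle $i$ is therefore a weighted cyclic permutation whose $p_i$-th power is a scalar equal to the cycle product in $(\star)$. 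Since $f$ is a genuine rational map, Grötzsch's modulus inequality, applied iteratively around the cycle to the invariant annular decomposition $V_{i,j} = A_{i,j}^{\outer} \cup A_{i,j} \cup A_{i,j}^{\inner} \cup T_{i,j}$, forces this cycle product to be strictly less than $1$ for every $i$, which is exactly $(\star)$.

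\textbf{Sufficiency direction.} Conversely, assume $(\star)$. I would construct $f$ by quasiconformal surgery on $P$. For each $U_{i,j} \in \Pole$, fix a small topological disk $T_{i,j}^{\ast} \subset U_{i,j}$ and a surrounding annular collar $A_{i,j}^{\ast} \subset U_{i,j} \setminus \overline{T_{i,j}^{\ast}}$. Replace the dynamics of $P$ on $T_{i,j}^{\ast}$ by a standard degree-$d_{i,j}$ pole-like branched covering onto a fixed model basin $V_{\infty}^{\ast}$ around $\infty$, and on $A_{i,j}^{\ast}$ by a quasiconformal interpolation bridging the pole map and $P|_{\partial U_{i,j}}$. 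This produces a quasiregular branched covering $F : \CC \to \CC$ of degree $n+d$ realizing the combinatorics of type $(P,\Pole)$. The collar moduli can then be chosen to satisfy a cyclic system of modulus equations whose solvability around each cycle is equivalent to $(\star)$; for such a choice, the iterated pullback of the standard complex structure produces an $F$-invariant Beltrami coefficient whose essential supremum stays strictly below $1$ (the cycle products control the compounding of dilatation under iteration). The measurable Riemann mapping theorem (cf.~\cite{AhlforsQuasiconformalBook,QuasiconformalSurgeryBook}) then straightens this Beltrami form and conjugates $F$ to a rational map $f$, which by construction satisfies all four clauses of Definition \ref{DefMcMullenLikeMap}.

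\textbf{Main obstacle.} The delicate point of necessity is the non-pole links of each cycle: the domain $V_{i,j}$ with $U_{i,j} \notin \Pole$ is simply connected and carries no intrinsic annular modulus, yet must contribute a clean factor $1/n_{i,j}$ to the cycle product. This forces careful use of point \textbf{(iv)} of Definition \ref{DefMcMullenLikeMap} to guarantee that the pullback $\gamma_{i,j}$ is a single essential curve rather than a multicurve with branching noise. On the sufficiency side, the main difficulty is the coherent simultaneous choice of collar moduli around each cycle so that the Beltrami dilatation of the straightened structure stays uniformly bounded under iteration; the strict inequality in $(\star)$ is precisely the contractivity of the cyclic Thurston operator that makes this compounding possible.
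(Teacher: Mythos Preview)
Your two-part strategy matches the paper's: necessity via a Thurston multicurve whose transition matrix is a weighted cyclic permutation, and sufficiency via quasiconformal surgery on $P$. For necessity the paper simply invokes Theorems B.3--B.4 of \cite{McMullenBook} as a black box where you sketch the underlying Gr\"otzsch argument directly; these are the same content.

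For sufficiency one point needs sharpening. You write that the cycle products ``control the compounding of dilatation under iteration,'' which suggests that $(\star)$ is what keeps an infinite product of dilatations bounded. That is not the mechanism, and taken literally it would not work: if the non-conformal locus were recurrent, no inequality on cycle products would save you. In the paper's construction $(\star)$ is used \emph{upstream}, in Lemmas \ref{LemTechnic} and \ref{LemEquipotentials}, to choose the equipotential levels so that the surgery annulus $\overline{A(\gamma_{i,j}^{\outer},\gamma_{i,j}^{\inner})}$ is carried by $F^{t_{i,j}}$ into the trap-door disk $D(\gamma_{i,j+t_{i,j}}^{\infty})$ and thence into $D_{\infty}$ (Lemma \ref{LemNoQuasiconformalRecurrence}). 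Consequently every orbit meets the non-holomorphic locus at most twice, and the pulled-back Beltrami form has bounded dilatation by Shishikura's principle---there is no compounding at all. Your ``cyclic system of modulus equations'' is exactly Lemma \ref{LemTechnic}, but its purpose is to arrange this non-recurrence, not to damp an iterated distortion.
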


Let us give some remarks about this arithmetic condition.

\begin{rem}
It is enough to check  the arithmetic condition (\ref{EqArithmeticCondition}) for every $1\leqslant i\leqslant N$ such that the set of indices $J_{i}=\{j\in\Z/p_{i}\Z\ /\ U_{i,j}\in\Pole\}$ is not empty. Indeed, for every $1\leqslant i\leqslant N$ there is at least one degree $n_{i,j}\geqslant 2$, and hence $\prod_{j\in\Z/p_{i}\Z}\frac{1}{n_{i,j}}<1$. Moreover, notice that if $n_{i,j}\geqslant 2$ and $d_{i,j}\geqslant 3$ for every $U_{i,j}\in\Pole$, then the arithmetic condition (\ref{EqArithmeticCondition}) holds. Equivalently speaking, it is sufficient that every periodic bounded Fatou domain picked in the pole data contains a critical point, and every degree in the pole data is larger than $3$.
\end{rem}
 
\begin{rem}
If $P$ is affine conjugate to $z\mapsto z^{n}$, namely if $N=1$ and $p_{1}=1$, then the arithmetic condition (\ref{EqArithmeticCondition}) implies the well known arithmetic condition $\frac{1}{n}+\frac{1}{d}<1$ (see \cite{AutomorphismsRationalMaps} and Proposition \ref{ex1:McMullen}). If $P$ is affine conjugate to $z\mapsto z^{n}+c$ where $c$ is a center of a hyperbolic component of the corresponding Multibrot set, and if $\Pole$ only consists of the bounded Fatou domain containing the unique critical point with $d\geqslant 3$, then there exists a McMullen-like mapping of type $(P,\Pole)$ (see Propositions \ref{ex2:GeneralizedMcMullen} and \ref{pro:new2}). The arithmetic condition (\ref{EqArithmeticCondition}) implies the arithmetic condition introduced in \cite{SingularPerturbationsQuadraticMultiplePoles} (see Proposition \ref{ex3:Multipoles}), but the converse is not true (see Proposition \ref{pro:new3}).
\end{rem}

\begin{rem}\label{rem:degree}
It is straightforward to show that  the degree of a McMullen-like mapping, $\deg(f)=\deg(P)+\deg(\Pole)$,  has a lower bound according to condition (\ref{EqArithmeticCondition}). This lower bound is reached for a polynomial $P$ of degree $n=3$ with two simple critical points in a same super-attracting cycle of period 2, and a pole data which only consists of one of the two bounded Fatou domains containing a critical point with $d=1$ (see Proposition \ref{pro:new1}). Indeed, in that case (\ref{EqArithmeticCondition}) reduces to $\frac{1}{2}(\frac{1}{2}+\frac{1}{1})=\frac{3}{4}<1$.  In particular, there is no McMullen-like mappings of degree less than 4. We notice that the first example of a rational maps of degree less than 5 with buried Julia components was recently founded in \cite{FamilyBuriedJuliaComponents} for an explicit family of cubic rational maps.
\end{rem}

\begin{rem}
Finally, and according to Theorem \ref{ThmJuliaHomeomorphic}, the set of all types $(P,\Pole)$, where $P$ is a monic centered $\HPcFP$ and $\Pole$ is an associated pole data which satisfies condition (\ref{EqArithmeticCondition}), is in 1-to-1 correspondence with the set of all topological conjugation classes of Julia sets of McMullen-like mappings.
\end{rem}


\subsection{Thurston obstruction: Proof of Theorem A}\label{subsec:necessity}

The main ingredient to prove the necessity of the arithmetic condition (\ref{EqArithmeticCondition}) is the theory of Thurston obstructions for rational maps (see \cite{ThurstonProof} and \cite{McMullenBook}).

Assume there exists a McMullen-like mapping $f$ of a given type $(P,\Pole)$. Denote by $\Postcrit_{f}$ the closure of its postcritical set. Fix $1\leqslant i\leqslant N$ such that the set of indices $J_{i}=\{j\in\Z/p_{i}\Z\ /\ U_{i,j}\in\Pole\}$ is not empty. Remark that every $U_{i,j}$ may be uniquely written as $U_{i,j'+k}$ for some $U_{i,j'}\in\Pole$ and some $1\leqslant k\leqslant t_{i,j'}$.

For every $j\in\Z/p_{i}\Z$, consider an arbitrary Jordan curve $\Gamma_{i,j}$ in $V_{i,j}$ such that
\begin{itemize}
\item $\Gamma_{i,j}$ separates $\partial V_{i,j}$ from $\overline{f^{k}(A_{i,j'})}\subset V_{i,j}$ if $U_{i,j}=U_{i,j'+k}\notin\Pole$ with $1\leqslant k<t_{i,j'}$;
\item $\Gamma_{i,j}$ separates $\partial V_{i,j}$ from $\overline{f^{t_{i,j'}}(A_{i,j'})}=\overline{T_{i,j}}$ (by Lemma \ref{LemPreimageTrapDoor}) if $U_{i,j}=U_{i,j'+t_{i,j'}}\in\Pole$.
\end{itemize}
From Definition \ref{DefMcMullenLikeMap} and Lemma \ref{LemDynamicsOnVij}, every $f^{k}(A_{i,j'})$ contains at least $n_{i,j'}+d_{i,j'}\geqslant 2$ postcritical points. Thus, it is straightforward to show that $\Gamma_{i}=\{\Gamma_{i,j}\ /\ j\in\Z/p_{i}\Z\}$ is a multicurve, namely a finite collection of disjoint, non-homotopic, and non-peripheral Jordan curves in $\CC\setminus\Postcrit_{f}$ (recall that a Jordan curve in $\CC\setminus\Postcrit_{f}$ is said to be non-peripheral if each connected component of its complement contains at least two points in $\Postcrit_{f}$).

For every $\Gamma_{i,j}\in\Gamma_{i}$, consider the connected components of $f^{-1}(\Gamma_{i,j})$ which are homotopic in $\CC\setminus\Postcrit_{f}$ to some Jordan curves in $\Gamma_{i}$. According to Definition \ref{DefMcMullenLikeMap} and Lemma \ref{LemDynamicsOnVij}, any such connected component lies in $U_{i,j-1}$, and hence is homotopic in $\CC\setminus\Postcrit_{f}$ to $\Gamma_{i,j-1}$. More precisely, there are:
\begin{itemize}
	\item only one connected component if $U_{i,j-1}\notin\Pole$, which is mapped onto $\Gamma_{i,j}$ with degree $n_{i,j-1}$;
	\item two connected components if $U_{i,j-1}\in\Pole$, one in $A_{i,j-1}^{\outer}$ and one in $A_{i,j-1}^{\inner}$, which are mapped onto $\Gamma_{i,j}$ with degrees $n_{i,j-1}$ and $d_{i,j-1}$ respectively.
\end{itemize}
It follows that the transition matrix associated to the multicurve $\Gamma_{i}$ may be written as
$$\left(\vcenter{\xymatrix @R=1pt @C=1pt {
	0 \ar@{-}[rrrrrrdddddd] & m_{1,2} & 0 \ar@{-}[rrrr] \ar@{-}[rrrrdddd] &&&& 0 \ar@{-}[dddd] \\
	0 \ar@{-}[dddd] \ar@{-}[rrrrrddddd] && m_{2,3} \ar@{-}[rrrrdddd] &&&& \\
	&&&&&& \\
	&&&&&& \\
	&&&&&& 0 \\
	0 \ar@{-}[rd] &&&&&& m_{p_{i}-1,p_{i}} \\ 
	m_{p_{i},1} & 0 \ar@{-}[rrrr] &&&& 0 & 0 \\
}}\right)\quad\text{where}\quad m_{j-1,j}=\left\{\begin{array}{ccl}
	\frac{1}{n_{i,j-1}} & \text{if} & U_{i,j-1}\notin\Pole \\
	\frac{1}{n_{i,j-1}}+\frac{1}{d_{i,j-1}} & \text{if} & U_{i,j-1}\in\Pole \\
\end{array}\right..$$
It is straightforward to show that the associated leading eigenvalue is
$$\lambda(\Gamma_{i})=\left[\prod_{j\in\Z/p_{i}\Z}m_{j-1,j}\right]^{\frac{1}{p_{i}}}=\left[\prod_{\substack{j\in\Z/p_{i}\Z \\ U_{i,j}\notin\Pole}}\frac{1}{n_{i,j}}\times\prod_{\substack{j\in\Z/p_{i}\Z \\ U_{i,j}\in\Pole}}\left(\frac{1}{n_{i,j}}+\frac{1}{d_{i,j}}\right)\right]^{\frac{1}{p_{i}}}.$$

Now applying Theorem B.3 and Theorem B.4 from \cite{McMullenBook} to the hyperbolic rational map $f$, we get $\lambda(\Gamma_{i})<1$ for every $1\leqslant i\leqslant N$ such that $J_{i}\neq\emptyset$, that implies the arithmetic condition (\ref{EqArithmeticCondition}).


\subsection{Construction of McMullen-like mappings: Proof of Theorem A}\label{subsec:construction}

In this section, we construct a McMullen-like mapping of an arbitrary given type $(P,\Pole)$ which satisfies the arithmetic condition (\ref{EqArithmeticCondition}). The method is to start from the polynomial $P$ and to add a pole of degree $d_{i,j}$ on every $U_{i,j}\in\Pole$ by quasiconformal surgery (we refer readers to \cite{QuasiconformalSurgeryBook} for a comprehensive treatment on this powerful method). At first, we need the two following technical lemmas that will allow us to divide the Riemann sphere $\CC$ into several pieces on which a quasiregular map will be defined.

\begin{lem}\label{LemTechnic}
If the arithmetic condition (\ref{EqArithmeticCondition}) holds then there exist some positive real numbers $(\alpha_{i,j})_{U_{i,j}\in\Pole}$ and $(\beta_{i,j})_{U_{i,j}\in\Pole}$ such that
$$\forall U_{i,j}\in\Pole,\quad\alpha_{i,j}<\beta_{i,j}\quad\text{and}\quad\left(\frac{n_{i,j+t_{i,j}}}{d_{i,j+t_{i,j}}}\alpha_{i,j+t_{i,j}}+\beta_{i,j+t_{i,j}}\right)<\left(\prod_{k=0}^{t_{i,j}-1}n_{i,j+k}\right)\alpha_{i,j}.$$
\end{lem}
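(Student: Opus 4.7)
The inequalities to be satisfied are indexed only by $U_{i,j}\in\Pole$ and, for each fixed $i$, involve only the $\alpha_{i,j},\beta_{i,j}$ with $j\in J_i=\{j\in\Z/p_i\Z\ /\ U_{i,j}\in\Pole\}$; so it suffices to handle one $i$ with $J_i\neq\emptyset$ at a time. The map $j\mapsto j+t_{i,j}$ is, by the very definition of $t_{i,j}$, the cyclic successor on $J_i$ (viewed as a subset of the cyclically ordered $\Z/p_i\Z$), hence a cyclic permutation of order $s=|J_i|$. Enumerate $J_i=\{j_1,\dots,j_s\}$ so that $j_{k+1}=j_k+t_{i,j_k}$ (indices mod $s$), and lighten notation by writing $a_k=\alpha_{i,j_k}$, $b_k=\beta_{i,j_k}$, $N_k=n_{i,j_k}$, $D_k=d_{i,j_k}$, and $M_k=\prod_{\ell=0}^{t_{i,j_k}-1}n_{i,j_k+\ell}$. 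The desired inequalities then read
\[
a_k<b_k \qquad\text{and}\qquad \frac{N_{k+1}}{D_{k+1}}\,a_{k+1}+b_{k+1} \;<\; M_k\,a_k \qquad(k\in\Z/s\Z).
\]

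Next I would rewrite the relevant factor of (\ref{EqArithmeticCondition}) by grouping, for each $k$, the term $\tfrac{1}{N_k}+\tfrac{1}{D_k}=\tfrac{N_k+D_k}{N_kD_k}$ together with the factors $\tfrac{1}{n_{i,j_k+\ell}}$ for $\ell=1,\dots,t_{i,j_k}-1$. Since $M_k=N_k\prod_{\ell=1}^{t_{i,j_k}-1}n_{i,j_k+\ell}$, this grouping reproduces the computation of $\lambda(\Gamma_i)^{p_i}$ from Subsection \ref{subsec:necessity} and shows that
\[
\prod_{k=1}^{s}\frac{N_k+D_k}{D_k M_k}\;<\;1 \quad\Longleftrightarrow\quad \prod_{k=1}^{s}\frac{M_k D_{k+1}}{N_{k+1}+D_{k+1}}\;>\;1,
\]
the equivalence being the cyclic relabeling $k+1\mapsto k$ in the denominator. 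Thus, under (\ref{EqArithmeticCondition}), the right-hand product exceeds $1$.

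The construction is now mechanical. Eliminating $b_{k+1}$ by requiring $b_{k+1}>a_{k+1}$ collapses the two inequalities into the single condition $\tfrac{a_{k+1}}{a_k}<\tfrac{M_k D_{k+1}}{N_{k+1}+D_{k+1}}$. I would set
\[
\rho=\left(\prod_{k=1}^{s}\frac{M_k D_{k+1}}{N_{k+1}+D_{k+1}}\right)^{-1/s}\in(0,1),
\]
pick any $a_1>0$, and define recursively $a_{k+1}=\rho\,\tfrac{M_k D_{k+1}}{N_{k+1}+D_{k+1}}\,a_k$; the choice of $\rho$ forces $a_{s+1}=a_1$, so the $a_k$ are consistently defined on $\Z/s\Z$, and the strict inequality above holds because $\rho<1$. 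It then suffices to pick each $b_{k+1}$ in the nonempty open interval $\bigl(a_{k+1},\,M_k a_k-\tfrac{N_{k+1}}{D_{k+1}}a_{k+1}\bigr)$, whose nonemptyness is exactly the strict inequality just established. Doing this independently for every $i$ with $J_i\neq\emptyset$ yields all the required $(\alpha_{i,j},\beta_{i,j})$. The only step that requires genuine thought is the algebraic identity of the second paragraph, which rewrites the cyclic product in (\ref{EqArithmeticCondition}) in the form best suited to controlling the system of inequalities; everything else is linear bookkeeping around the cycle.
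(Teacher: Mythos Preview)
Your proof is correct and follows essentially the same approach as the paper: both fix a cycle $i$, set up a multiplicative recursion $a_{k+1}=c\cdot\dfrac{M_kD_{k+1}}{N_{k+1}+D_{k+1}}\,a_k$ with a normalizing constant $c<1$ chosen so that the recursion closes after $s$ steps (your $\rho$ equals the paper's $M^{-2}$), and then define the $\beta$'s so that the remaining strict inequalities hold. The only cosmetic difference is that the paper fixes $\beta_{i,j}$ by an explicit formula while you leave it in an open interval.
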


\begin{proof}
Fix $1\leqslant i\leqslant N$ such that the set of indices $J_{i}=\{j\in\Z/p_{i}\Z\ /\ U_{i,j}\in\Pole\}$ is not empty, and let $M$ be the following positive constant.
$$M=\left[\prod_{\substack{j\in\Z/p_{i}\Z \\ U_{i,j}\notin\Pole}}\frac{1}{n_{i,j}}\times\prod_{\substack{j\in\Z/p_{i}\Z \\ U_{i,j}\in\Pole}}\left(\frac{1}{n_{i,j}}+\frac{1}{d_{i,j}}\right)\right]^{-1/2|J_{i}|}=\prod_{j\in J_{i}}\left[\prod_{k=1}^{t_{i,j}-1}\frac{1}{n_{i,j+k}}\times\left(\frac{1}{n_{i,j}}+\frac{1}{d_{i,j}}\right)\right]^{-1/2|J_{i}|}$$
Remark that the arithmetic condition (\ref{EqArithmeticCondition}) precisely implies that $M>1$.

Now pick one $j_{0}\in J_{i}$, let $\alpha_{i,j_{0}}$ be any positive real number, and recursively define $\alpha_{i,j}$ for every $j\in J_{i}$ by
\begin{equation}\label{EqLemTechnic}
	\alpha_{i,j+t_{i,j}}=\left(M^{-2}\frac{n_{i,j}}{n_{i,j+t_{i,j}}}\left[\prod_{k=1}^{t_{i,j}-1}\frac{1}{n_{i,j+k}}\times\left(\frac{1}{n_{i,j+t_{i,j}}}+\frac{1}{d_{i,j+t_{i,j}}}\right)\right]^{-1}\right)\alpha_{i,j}.
\end{equation}
The $(\alpha_{i,j})_{j\in J_{i}}$ are well defined because the product over $j\in J_{i}$ of the terms in the biggest brackets is $1$ by definition of $M$. Define $\beta_{i,j}$ for every $j\in J_{i}$ so that
$$Mn_{i,j}\left(\frac{1}{n_{i,j}}+\frac{1}{d_{i,j}}\right)\alpha_{i,j}=\frac{n_{i,j}}{d_{i,j}}\alpha_{i,j}+\beta_{i,j}\quad\text{or equivalently}\quad\beta_{i,j}=M\alpha_{i,j}+(M-1)\frac{n_{i,j}}{d_{i,j}}\alpha_{i,j}.$$
Remark that $\beta_{i,j}>\alpha_{i,j}$ since $M>1$. Moreover, (\ref{EqLemTechnic}) may be rewritten as follows
$$Mn_{i,j+t_{i,j}}\left(\frac{1}{n_{i,j+t_{i,j}}}+\frac{1}{d_{i,j+t_{i,j}}}\right)\alpha_{i,j+t_{i,j}}=M^{-1}\left(\prod_{k=0}^{t_{i,j}-1}n_{i,j+k}\right)\alpha_{i,j}.$$
Using the definition of $\beta_{i,j+t_{i,j}}$ above concludes the proof since $M^{-1}<1$.
\end{proof}

For every periodic bounded Fatou domain $U_{i,j}$ (not necessarily in $\Pole$), Böttcher Theorem provides a Riemann mapping $\phi_{i,j}:\D\rightarrow U_{i,j}$ such that the following diagram commutes.

$$\xymatrix{
	\D \ar[rr]^{\textstyle z\mapsto z^{n_{i,j}}} \ar[d]_{\textstyle \phi_{i,j}} && \D \ar[rr]^{\textstyle z\mapsto z^{n_{i,j+1}}} \ar[d]_{\textstyle \phi_{i,j+1}} && \D \ar[r] \ar[d]_{\textstyle \phi_{i,j+2}} & \dots \ar[r] & \D \ar[rr]^{\textstyle z\mapsto z^{n_{i,j+p_{i}-1}}} \ar[d]_{\textstyle \phi_{i,j+p_{i}-1}} && \D \ar[d]_{\textstyle \phi_{i,j+p_{i}}=\phi_{i,j}} \\
	U_{i,j} \ar[rr]_{\textstyle P} && U_{i,j+1} \ar[rr]_{\textstyle P} && U_{i,j+2} \ar[r] & \dots \ar[r] & U_{i,j+p_{i}-1} \ar[rr]_{\textstyle P} && U_{i,j+p_{i}}=U_{i,j} \\
}$$

An equipotential $\gamma$ in some $U_{i,j}$ is the image by $\phi_{i,j}$ of an Euclidean circle in $\D$ centered at $0$. The radius of this circle is called the level of $\gamma$, and is denoted by $L_{i,j}(\gamma)\in]0,1[$ in order that $\gamma=\{z\in U_{i,j}\ /\ |\phi_{i,j}^{-1}(z)|=L_{i,j}(\gamma)\}$.

Similarly, Böttcher Theorem provides a Riemann mapping $\phi_{\infty}:\CC\setminus\D\rightarrow U_{\infty}$ which conjugates $P|_{U_{\infty}}$ with $z\mapsto z^{n}$. The equipotentials in $U_{\infty}$ are defined as well (with level $>1$).

Recall that any pair of disjoint continua $\gamma,\gamma'$ in $\CC$ uniquely defines a doubly connected domain denoted by $\annulus{\gamma}{\gamma'}$. If $\gamma,\gamma'$ contain at least two points each, $\annulus{\gamma}{\gamma'}$ is biholomorphic to a round annulus of the form $\{z\in\C\ /\ r<|z|<1\}$ where $r\in]0,1[$ does not depend on the choice of biholomorphism. The modulus of $\annulus{\gamma}{\gamma'}$ is defined to be $\modulus{\gamma}{\gamma'}=\frac{1}{2\pi}\log(\frac{1}{r})$. In particular if $\gamma,\gamma'$ are two equipotentials in some $U_{i,j}$ of levels $L_{i,j}(\gamma)>L_{i,j}(\gamma')$ then
$$\modulus{\gamma}{\gamma'}=\frac{1}{2\pi}\log\left(\frac{L_{i,j}(\gamma)}{L_{i,j}(\gamma')}\right).$$

\begin{lem}\label{LemEquipotentials}
If the arithmetic condition (\ref{EqArithmeticCondition}) holds then there exist an equipotential $\Gamma_{\infty}$ in $U_{\infty}$, and three equipotentials $\gamma_{i,j}^{\outer},\gamma_{i,j}^{\inner},\gamma_{i,j}^{\infty}$ in every $U_{i,j}\in\Pole$ such that
\begin{description}
	\item[(i)] $L_{i,j}(\gamma_{i,j}^{\outer})>L_{i,j}(\gamma_{i,j}^{\inner})>L_{i,j}(\gamma_{i,j}^{\infty})$;
	\item[(ii)] $\modulus{\gamma_{i,j}^{\inner}}{\gamma_{i,j}^{\infty}}=\frac{1}{d_{i,j}}\modulus{\Gamma_{i,j+1}}{\Gamma_{\infty}}$ where $\Gamma_{i,j+1}=P(\gamma_{i,j}^{\outer})$;
	\item[(iii)] $L_{i,j+t_{i,j}}(\gamma_{i,j+t_{i,j}}^{\infty})>\left(\prod_{k=0}^{t_{i,j}-1}n_{i,j+k}\right)L_{i,j}(\gamma_{i,j}^{\outer})$.
\end{description}
\end{lem}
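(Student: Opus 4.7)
The plan is to translate the abstract inequalities of Lemma~\ref{LemTechnic} into geometric constraints on equipotentials, by parametrizing their levels through a common scaling parameter $t>0$ that will be taken small enough at the end.

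First, I would apply Lemma~\ref{LemTechnic} to extract positive reals $(\alpha_{i,j})_{U_{i,j}\in\Pole}$ and $(\beta_{i,j})_{U_{i,j}\in\Pole}$ with $\alpha_{i,j}<\beta_{i,j}$ and the stated inequality involving $t_{i,j}$. Fix any equipotential $\Gamma_\infty\subset U_\infty$ (of level greater than $1$). For each $U_{i,j}\in\Pole$ and parameter $t>0$, I would set
$$
L_{i,j}(\gamma_{i,j}^{\outer}):=e^{-t\alpha_{i,j}}\quad\text{and}\quad L_{i,j}(\gamma_{i,j}^{\inner}):=e^{-t\beta_{i,j}}.
$$
The inequality $L_{i,j}(\gamma_{i,j}^{\outer})>L_{i,j}(\gamma_{i,j}^{\inner})$ required by (i) is immediate. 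Böttcher conjugacy forces $\Gamma_{i,j+1}=P(\gamma_{i,j}^{\outer})$ to be the equipotential of level $e^{-tn_{i,j}\alpha_{i,j}}$ in $U_{i,j+1}$, so $\modulus{\Gamma_{i,j+1}}{\Gamma_\infty}$ is a well-defined positive real depending continuously on $t$.

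Then, I would define $\gamma_{i,j}^{\infty}$ as the unique equipotential in $U_{i,j}$ for which condition~(ii) holds by construction, i.e., having level
$$
L_{i,j}(\gamma_{i,j}^{\infty})=L_{i,j}(\gamma_{i,j}^{\inner})\cdot\exp\left(-\frac{2\pi}{d_{i,j}}\modulus{\Gamma_{i,j+1}}{\Gamma_\infty}\right).
$$
Since the exponential factor is strictly less than $1$, the remaining inequality $L_{i,j}(\gamma_{i,j}^{\inner})>L_{i,j}(\gamma_{i,j}^{\infty})$ of~(i) holds automatically.

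The main obstacle is verifying condition~(iii), and the crucial point is to control $\modulus{\Gamma_{i,j+1}}{\Gamma_\infty}$ from below uniformly in $t$. Since $P$ is a $\HPcFP$, the limit curves $\partial U_{i,j+1}$ and $\Gamma_\infty$ are disjoint Jordan curves in $\CC$, and $\Gamma_{i,j+1}$ converges to $\partial U_{i,j+1}$ in the Hausdorff sense as $t\to 0^+$. Hence $\modulus{\Gamma_{i,j+1}}{\Gamma_\infty}$ tends to the finite strictly positive limit $\modulus{\partial U_{i,j+1}}{\Gamma_\infty}$, yielding a uniform lower bound $c_{i,j}>0$ for small $t$. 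Reading condition~(iii) in logarithmic form and substituting the parametrization, it becomes
$$
t\,\beta_{i,j+t_{i,j}}+\frac{2\pi}{d_{i,j+t_{i,j}}}\modulus{\Gamma_{i,j+t_{i,j}+1}}{\Gamma_\infty}\ >\ t\left(\prod_{k=0}^{t_{i,j}-1}n_{i,j+k}\right)\alpha_{i,j}.
$$
By Lemma~\ref{LemTechnic} the quantity $\left(\prod_{k=0}^{t_{i,j}-1}n_{i,j+k}\right)\alpha_{i,j}-\beta_{i,j+t_{i,j}}$ is strictly positive (it exceeds $\tfrac{n_{i,j+t_{i,j}}}{d_{i,j+t_{i,j}}}\alpha_{i,j+t_{i,j}}$), so the required inequality rearranges to an explicit upper bound on $t$. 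Since $\Pole$ is finite, choosing $t>0$ smaller than all such bounds simultaneously concludes the proof.
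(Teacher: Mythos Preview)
Your setup is essentially the paper's (with $r=e^{-t}$), and the verification of \textbf{(i)} and \textbf{(ii)} is correct. The flaw is in the treatment of \textbf{(iii)}: you have the logarithmic inequality reversed. Since $L_{i,j+t_{i,j}}(\gamma_{i,j+t_{i,j}}^{\infty})=\exp\!\big(-t\beta_{i,j+t_{i,j}}-\tfrac{2\pi}{d_{i,j+t_{i,j}}}\modulus{\Gamma_{i,j+t_{i,j}+1}}{\Gamma_\infty}\big)$ and the right-hand side of \textbf{(iii)} is (in radius convention) $L_{i,j}(\gamma_{i,j}^{\outer})^{\prod_k n_{i,j+k}}=e^{-t(\prod_k n_{i,j+k})\alpha_{i,j}}$, taking $-\log$ of the inequality $L^{\infty}>L^{\outer\,\prod n}$ gives
\[
t\,\beta_{i,j+t_{i,j}}+\frac{2\pi}{d_{i,j+t_{i,j}}}\modulus{\Gamma_{i,j+t_{i,j}+1}}{\Gamma_\infty}\ <\ t\Big(\prod_{k=0}^{t_{i,j}-1}n_{i,j+k}\Big)\alpha_{i,j},
\]
not ``$>$''. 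With the correct sign, your argument collapses: as $t\to 0^{+}$ the left side tends to the strictly positive constant $\tfrac{2\pi}{d_{i,j+t_{i,j}}}\modulus{\partial U_{i,j+t_{i,j}+1}}{\Gamma_\infty}$ while the right side tends to $0$, so small $t$ is precisely the regime where \textbf{(iii)} \emph{fails}. A lower bound on the modulus makes the desired inequality harder, not easier.

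What is actually needed is the opposite regime $t\to\infty$ together with an \emph{upper} bound on $\modulus{\Gamma_{i,j+t_{i,j}+1}}{\Gamma_\infty}$. This is the nontrivial step: the annulus $A(\Gamma_{i,j+t_{i,j}+1},\Gamma_\infty)$ crosses the Julia set of $P$ and is not round, so one must show its modulus grows at most like $\tfrac{1}{2\pi}\log\big(1/L_{i,j+t_{i,j}+1}(\Gamma_{i,j+t_{i,j}+1})\big)+O(1)=\tfrac{t}{2\pi}n_{i,j+t_{i,j}}\alpha_{i,j+t_{i,j}}+O(1)$. The paper invokes the inverse Gr\"otzsch inequality of Cui--Tan for this. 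Plugging that linear upper bound into the correct inequality and using the full strength of Lemma~\ref{LemTechnic} (namely $\beta_{i,j+t_{i,j}}+\tfrac{n_{i,j+t_{i,j}}}{d_{i,j+t_{i,j}}}\alpha_{i,j+t_{i,j}}<(\prod_k n_{i,j+k})\alpha_{i,j}$, not merely the positivity you cite) then yields \textbf{(iii)} for all sufficiently large $t$.
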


Notice that $\Gamma_{i,j+1}$ is actually an equipotential in $U_{i,j+1}$ of level $L_{i,j+1}(\Gamma_{i,j+1})=n_{i,j}L_{i,j}(\gamma_{i,j}^{\outer})$.

\begin{proof}
Fix $\Gamma_{\infty}$ to be any arbitrary equipotential in $U_{\infty}$. Let $r$ be a real number in $]0,1[$. In each $U_{i,j}\in\Pole$, define $\gamma_{i,j}^{\outer}$ and $\gamma_{i,j}^{\inner}$ to be the equipotentials of levels $L_{i,j}(\gamma_{i,j}^{\outer})=r^{\alpha_{i,j}}$ and $L_{i,j}(\gamma_{i,j}^{\inner})=r^{\beta_{i,j}}$ respectively, where $\alpha_{i,j}$ and $\beta_{i,j}$ come from Lemma \ref{LemTechnic}. Notice that $L_{i,j}(\gamma_{i,j}^{\outer})>L_{i,j}(\gamma_{i,j}^{\inner})$ since $\alpha_{i,j}<\beta_{i,j}$. Define $\gamma_{i,j}^{\infty}$ in every $U_{i,j}\in\Pole$ so that the point \textbf{(ii)} holds, or equivalently $L_{i,j}(\gamma_{i,j}^{\infty})=r^{\delta_{i,j}}$ with
$$\delta_{i,j}=\beta_{i,j}+\dfrac{\frac{2\pi}{d_{i,j}}\modulus{\Gamma_{i,j+1}}{\Gamma_{\infty}}}{\ln\left(\frac{1}{r}\right)}.$$
In particular, $L_{i,j}(\gamma_{i,j}^{\inner})>L_{i,j}(\gamma_{i,j}^{\infty})$ since $\beta_{i,j}<\delta_{i,j}$ that completes the point \textbf{(i)}. For the last point, we have by using the inverse Grötzch inequality due to Cui Guizhen and Tan Lei (see Appendix B in \cite{ThurstonSubHyperbolic}):
$$\modulus{\Gamma_{i,j+t_{i,j}+1}}{\Gamma_{\infty}}\leqslant C+\frac{1}{2\pi}\ln\left(\frac{1}{L_{i,j+t_{i,j}+1}(\Gamma_{i,j+t_{i,j}+1})}\right)=C+\frac{n_{i,j+t_{i,j}}}{2\pi}\alpha_{i,j+t_{i,j}}\ln\left(\frac{1}{r}\right)$$
where $C>0$ does not depend on $r$. Putting this in the expression of $\delta_{i,j+t_{i,j}}$ leads to
$$\delta_{i,j+t_{i,j}}\leqslant\left(\frac{n_{i,j+t_{i,j}}}{d_{i,j+t_{i,j}}}\alpha_{i,j+t_{i,j}}+\beta_{i,j+t_{i,j}}\right)+\frac{2\pi C}{d_{i,j+t_{i,j}}\ln\left(\frac{1}{r}\right)}<\left(\prod_{k=0}^{t_{i,j}-1}n_{i,j+k}\right)\alpha_{i,j}$$
provided $r>0$ is small enough according to Lemma \ref{LemTechnic}. The point \textbf{(iii)} follows.
\end{proof}

Now we are going to piecewisely define a quasiregular map $F$ on $\CC$ according to a partition induced by the equipotentials coming from Lemma \ref{LemEquipotentials}. Let $D_{\infty}$ be the unbounded connected component of $\CC\setminus\Gamma_{\infty}$, and $W_{\infty}$ be the unbounded connected component of $\CC\setminus\bigcup_{U_{i,j}\in\Pole}\gamma_{i,j}^{\outer}$. Denote by $D(\gamma)$ the bounded connected component of $\CC\setminus\gamma$ for every Jordan curve $\gamma$ in $\C$. Consider the following partition of $\CC$:
$$\CC=W_{\infty}\bigcup_{U_{i,j}\in\Pole}\left(\overline{A(\gamma_{i,j}^{\outer},\gamma_{i,j}^{\inner})}\cup A(\gamma_{i,j}^{\inner},\gamma_{i,j}^{\infty})\cup\overline{D(\gamma_{i,j}^{\infty}})\right).$$

\begin{itemize}
\item On $W_{\infty}$:\\
	Define $F$ to be the polynomial $P$ in order that $F$ continuously extend to every $\gamma_{i,j}^{\outer}$ with $F(\gamma_{i,j}^{\outer})=P(\gamma_{i,j}^{\outer})=\Gamma_{i,j+1}$ and $\deg(F|_{\gamma_{i,j}^{\outer}})=n_{i,j}$.
\item On every $A(\gamma_{i,j}^{\inner},\gamma_{i,j}^{\infty})$:\\
	Define $F$ to be such that $F(A(\gamma_{i,j}^{\inner},\gamma_{i,j}^{\infty}))=A(\Gamma_{i,j+1},\Gamma_{\infty})$ and $F|_{A(\gamma_{i,j}^{\inner},\gamma_{i,j}^{\infty})}$ is a holomorphic covering of degree $d_{i,j}$ which continuously extends on the boundary with $F(\gamma_{i,j}^{\inner})=\Gamma_{i,j+1}$ and $F(\gamma_{i,j}^{\infty})=\Gamma_{\infty}$. The point \textbf{(ii)} in Lemma \ref{LemEquipotentials} ensures that such a holomorphic covering exists. Notice that $\deg(F|_{\gamma_{i,j}^{\inner}})=\deg(F|_{\gamma_{i,j}^{\infty}})=d_{i,j}$.
\item On every $\overline{A(\gamma_{i,j}^{\outer},\gamma_{i,j}^{\inner})}$:\\
	Continuously extend $F$ so that $F(A(\gamma_{i,j}^{\outer},\gamma_{i,j}^{\inner}))=D(\Gamma_{i,j+1})$ and $F|_{A(\gamma_{i,j}^{\outer},\gamma_{i,j}^{\inner})}$ is a quasiregular branched covering. This extension must have degree $\deg(F|_{\gamma_{i,j}^{\outer}})+\deg(F|_{\gamma_{i,j}^{\inner}})=n_{i,j}+d_{i,j}$, and $n_{i,j}+d_{i,j}$ critical points counted with multiplicity by the Riemann-Hurwitz formula (compare with the action of a McMullen-like mapping on every $A_{i,j}$ in Lemma \ref{LemDynamicsOnVij}). The existence of such an annulus-disk map is discussed in \cite{DiskAnnulusSurgery} (see also \cite{QuasiconformalSurgeryBook}) and \cite{FamilyBuriedJuliaComponents}. 
\item On every $\overline{D(\gamma_{i,j}^{\infty})}$:\\
	Continuously extend $F$ so that $F(D(\gamma_{i,j}^{\infty}))=D_{\infty}$ and $F|_{D(\gamma_{i,j}^{\infty})}$ is a quasiregular branched covering. This extension must have degree $\deg(F|_{\gamma_{i,j}^{\infty}})=d_{i,j}$, and $d_{i,j}-1$ critical points counted with multiplicity by the Riemann-Hurwitz formula (compare with the action of a McMullen-like mapping on every $T_{i,j}$ in Lemma \ref{LemDynamicsOnVij}). To construct such a quasiregular branched covering, we may start from the map
$$\phi_{\infty} \circ\left(z\mapsto L_{\infty}(\Gamma_{\infty})\left(\frac{L_{i,j}(\gamma_{i,j}^{\infty})}{z}\right)^{d_{i,j}}\right)\circ\phi_{i,j}^{-1}$$
which is a holomorphic branched covering of degree $d_{i,j}$ from $D(\gamma_{i,j}^{\infty})$ onto $D_{\infty}$, and then quasiconformally modify it in a neighborhood of $\gamma_{i,j}^{\infty}$ in order that the continuous extension on $\gamma_{i,j}^{\infty}$ agrees with the definition of $F$ on $A(\gamma_{i,j}^{\inner},\gamma_{i,j}^{\infty})$.
\end{itemize}

Remark that $F$ is actually holomorphic outside $\bigcup_{U_{i,j}\in\Pole}\left(\overline{A(\gamma_{i,j}^{\outer},\gamma_{i,j}^{\inner})}\cup\overline{D(\gamma_{i,j}^{\infty}})\right)$. The following lemma shows that this set does not intersect its forward orbit after finitely many iterations.

\begin{lem}\label{LemNoQuasiconformalRecurrence}
For every $U_{i,j}\in\Pole$, the following hold
$$F^{t_{i,j}}\left(\overline{A(\gamma_{i,j}^{\outer},\gamma_{i,j}^{\inner})}\right)\subset D(\gamma_{i,j+t_{i,j}}^{\infty}),\quad F\left(\overline{D(\gamma_{i,j}^{\infty})}\right)=\overline{D_{\infty}},\quad\text{and}\quad F\left(\overline{D_{\infty}}\right)\subset D_{\infty}\subset W_{\infty}.$$

\begin{proof}
By definition of $F$ on $\overline{A(\gamma_{i,j}^{\outer},\gamma_{i,j}^{\inner})}$ and $W_{\infty}$, we have
$$F^{t_{i,j}}\left(\overline{A(\gamma_{i,j}^{\outer},\gamma_{i,j}^{\inner})}\right)=F^{t_{i,j}-1}\left(\overline{D(\Gamma_{i,j+1})}\right)=P^{t_{i,j}-1}\left(\overline{D(\Gamma_{i,j+1})}\right)=\overline{D(\Gamma_{i,j+t_{i,j}})}$$
where $\Gamma_{i,j+t_{i,j}}=P^{t_{i,j}-1}(\Gamma_{i,j+1})=P^{t_{i,j}}(\gamma_{i,j}^{\outer})$ is an equipotential in $U_{i,j+t_{i,j}}$ of level
$$L_{i,j+t_{i,j}}(\Gamma_{i,j+t_{i,j}})=\left(\prod_{k=0}^{t_{i,j}-1}n_{i,j+k}\right)L_{i,j}(\gamma_{i,j}^{\outer}).$$
The point \textbf{(iii)} in Lemma \ref{LemEquipotentials} gives $L_{i,j+t_{i,j}}(\gamma_{i,j+t_{i,j}}^{\infty})>L_{i,j+t_{i,j}}(\Gamma_{i,j+t_{i,j}})$ that implies the first inclusion. The equality follows from definition of $F$ on $\overline{D(\gamma_{i,j}^{\infty})}$. Finally, $D_{\infty}\subset U_{\infty}\subset W_{\infty}$ by definition of $W_{\infty}$, and hence $F|_{\overline{D_{\infty}}}=P|_{\overline{D_{\infty}}}$ is conjugate to the action of $z\mapsto z^{n}$ on $\CC\setminus L_{\infty}(\Gamma_{\infty})\D$ that concludes the proof.
\end{proof}
\end{lem}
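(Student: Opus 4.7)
The plan is to verify the three assertions separately, starting with the two simpler ones and then iterating $F$ for the first.

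The middle equality $F(\overline{D(\gamma_{i,j}^{\infty})})=\overline{D_{\infty}}$ is immediate from the construction of $F$ on $\overline{D(\gamma_{i,j}^{\infty})}$, which is defined precisely as a quasiregular branched covering of degree $d_{i,j}$ onto $\overline{D_{\infty}}$. For the third assertion, I would observe that $U_{\infty}$ contains no outer equipotential $\gamma_{i,j}^{\outer}$ (each of which lies in a bounded Fatou domain), so $U_{\infty}\subset W_{\infty}$ and in particular $\overline{D_{\infty}}\subset W_{\infty}$, where $F$ agrees with $P$. Transferring via the Böttcher conjugation $\phi_{\infty}$, the restriction $P|_{\overline{D_{\infty}}}$ becomes $z\mapsto z^{n}$ acting on the closed exterior of the circle of radius $L_{\infty}(\Gamma_{\infty})>1$; since $r^{n}>r$ whenever $r>1$ and $n\geqslant 2$, this yields the strict inclusion $F(\overline{D_{\infty}})\subset D_{\infty}$.

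For the first assertion, I would iterate $F$ starting from its definition on $\overline{A(\gamma_{i,j}^{\outer},\gamma_{i,j}^{\inner})}$: the first application sends this closed annulus onto $\overline{D(\Gamma_{i,j+1})}\subset U_{i,j+1}$. By minimality of $t_{i,j}$, every $U_{i,j+k}$ with $1\leqslant k<t_{i,j}$ is outside $\Pole$ and therefore carries no outer equipotential, so it is entirely contained in $W_{\infty}$; hence each subsequent iterate stays in $W_{\infty}$ where $F=P$. A short induction using the Böttcher conjugation in each $U_{i,j+k}$ then gives
$$F^{t_{i,j}}\left(\overline{A(\gamma_{i,j}^{\outer},\gamma_{i,j}^{\inner})}\right)=P^{t_{i,j}-1}\left(\overline{D(\Gamma_{i,j+1})}\right)=\overline{D(\Gamma_{i,j+t_{i,j}})},$$
where $\Gamma_{i,j+t_{i,j}}$ is an equipotential in $U_{i,j+t_{i,j}}$ whose level, tracked through the Böttcher coordinates, equals $\left(\prod_{k=0}^{t_{i,j}-1}n_{i,j+k}\right)L_{i,j}(\gamma_{i,j}^{\outer})$. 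Comparing this level with $L_{i,j+t_{i,j}}(\gamma_{i,j+t_{i,j}}^{\infty})$ via point \textbf{(iii)} of Lemma \ref{LemEquipotentials} concludes.

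The substantive point — and the place where the arithmetic condition ($\star$) ultimately enters, through Lemmas \ref{LemTechnic} and \ref{LemEquipotentials} — is the depth estimate of $\gamma_{i,j+t_{i,j}}^{\infty}$ inside $U_{i,j+t_{i,j}}$: it was designed exactly so that the blown-up equipotential $\Gamma_{i,j+t_{i,j}}$ falls strictly inside the disk bounded by $\gamma_{i,j+t_{i,j}}^{\infty}$. No new analytic ingredient is needed beyond unfolding the construction of $F$ and careful bookkeeping of equipotential levels under Böttcher coordinates; this is the step I expect to require the most attention, but it is already essentially packaged in the earlier lemmas.
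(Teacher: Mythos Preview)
Your proposal is correct and follows essentially the same approach as the paper's proof: both unfold the construction of $F$ piece by piece, identify $F$ with $P$ on $W_{\infty}$ (in particular on the intermediate domains $U_{i,j+k}$ with $1\leqslant k<t_{i,j}$ and on $\overline{D_{\infty}}$), track equipotential levels through the B\"ottcher conjugations, and invoke point \textbf{(iii)} of Lemma~\ref{LemEquipotentials} for the decisive depth comparison. Your write-up is in fact slightly more explicit than the paper's about why the intermediate Fatou domains lie in $W_{\infty}$ and why the strict inclusion $F(\overline{D_{\infty}})\subset D_{\infty}$ holds, but the underlying argument is identical.
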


As a consequence, the iterated pullback by $F$ of the standard complex structure provides a $F$-invariant Beltrami form on $\CC$ with uniformly bounded dilatation. Then, after integrating, there exists a quasiconformal map $\varphi:\CC\rightarrow\CC$ such that $f=\varphi\circ F\circ\varphi^{-1}$ is holomorphic on $\CC$, namely a rational map. We refer readers to \cite{QuasiconformalSurgery} and \cite{QuasiconformalSurgeryBook} for more details about this result known as Shishikura principle for quasiconformal surgery.

Finally, it is straightforward to see that $f$ is actually a McMullen-like mapping of type $(P,\Pole)$ by construction.


\section{Further examples of McMullen-like mappings}\label{sec:new_examples}

In this section we show several new  examples of  McMullen-like mappings. 

In the first example we give an explicit expression of a McMullen-like mapping of minimal degree 4 (see Remark \ref{rem:degree} below Theorem A). We notice that the first example of a rational map of degree less than 5 with buried Julia components was recently founded in  \cite{FamilyBuriedJuliaComponents}. We prove the following result.

\begin{pro}\label{pro:new1}
Let $q_{\lambda}(z)=2z^3-3z^2+1+\lambda/z$. For small enough values of $|\lambda|\neq0$, the rational map $q_{\lambda}$ is a McMullen-like mapping of minimal degree 4.
\end{pro}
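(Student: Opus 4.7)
The approach is to match $q_\lambda$ with the minimal-degree case of Theorem A by identifying the appropriate pair $(P, \Pole)$, then verify Definition \ref{DefMcMullenLikeMap} for $q_\lambda$ by a singular-perturbation argument.

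First, I would set up the algebraic data. The polynomial part $P(z) = 2z^3 - 3z^2 + 1$ has $P'(z) = 6z(z-1)$, so its critical points $0$ and $1$ are simple, and the values $P(0) = 1$, $P(1) = 0$ form a super-attracting cycle of period $2$ carrying both critical points. Thus $P$ is $\HPcFP$ with $N = 1$, $p_1 = 2$, and $n_{1,0} = n_{1,1} = 2$. The singular term $\lambda/z$ yields a simple pole at $0 \in U_{1,0}$, so $\Pole = \{U_{1,0}\}$ with $d_{1,0} = 1$; condition (\ref{EqArithmeticCondition}) then evaluates to $\frac{1}{2}(\frac{1}{2} + 1) = \frac{3}{4} < 1$, and Theorem A produces a McMullen-like mapping of type $(P, \Pole)$ of degree $\deg(P) + \deg(\Pole) = 4$, which is minimal by Remark \ref{rem:degree}.

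Next, for $|\lambda| \neq 0$ small, I would show that $q_\lambda$ itself realizes such a mapping by a singular-perturbation argument analogous to those used in \cite{GeneralizedMcMullenDomain} and \cite{SingularPerturbationsQuadraticMultiplePoles}. Since $q_\lambda \to P$ uniformly on compact subsets of $\CC \setminus \{0\}$, structural stability of hyperbolic dynamics yields persistence of the super-attracting fixed point at $\infty$ with immediate basin $V_\infty$ bounded by a Jordan curve on which $q_\lambda$ is topologically conjugate to $P|_{\partial U_\infty}$ (axiom \textbf{(i)}), and of the Fatou component $V_{1,1}$ close to $U_{1,1}$ mapping as a degree-$2$ branched covering onto a Fatou component $V_{1,0}$ close to $U_{1,0}$ (axiom \textbf{(ii)}). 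The critical points of $q_\lambda$ satisfy $6z^4 - 6z^3 = \lambda$, so three collapse to $0$ and one to $1$ as $|\lambda| \to 0$; substituting $\lambda = 6z^3(z-1)$ into $q_\lambda$ yields $8z^3 - 9z^2 + 1$ at each critical point near $0$, so the three critical values accumulate at $1 \in V_{1,1}$, while a Taylor expansion at $z = 1$ gives critical value $\lambda + O(\lambda^2)$ for the remaining critical point, i.e.\ near the pole.

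To finish, I would define $T_{1,0}$ as the connected component of $q_\lambda^{-1}(V_\infty)$ containing the pole; since the pole is simple, $T_{1,0}$ is a simply connected Jordan domain mapped $1$-to-$1$ onto $V_\infty$. This immediately gives axiom \textbf{(iv)}: the critical orbits track $c \to (\text{near } 1) \in V_{1,1} \to (\text{near } 0) \in T_{1,0}$ for the three critical points near $0$, and $c' \to (\text{near } 0) \in T_{1,0}$ for the critical point near $1$. For axiom \textbf{(iii)}, I would take $A_{1,0}$ to be the connected component, within $V_{1,0} \setminus \overline{T_{1,0}}$, of the $q_\lambda$-preimage of the simply connected subdomain $(q_\lambda|_{V_{1,1}})^{-1}(T_{1,0}) \subset V_{1,1}$. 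The main obstacle is showing that $A_{1,0}$ is a doubly connected domain separating $\partial V_{1,0}$ from $\overline{T_{1,0}}$, that it carries all three critical points near $0$, and that $q_\lambda|_{A_{1,0}}$ is proper of degree $n_{1,0} + d_{1,0} = 3$; this reduces to a Riemann-Hurwitz count identical to the one in the proof of Lemma \ref{LemPreimageTrapDoor}, once one uses the critical-value locations computed above to confirm all three critical values lie in $(q_\lambda|_{V_{1,1}})^{-1}(T_{1,0})$.
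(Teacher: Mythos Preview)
Your proposal is correct and follows essentially the same approach as the paper: identify $(P,\Pole)$, locate the perturbed critical points and their critical values, track the orbits into the trap door, and verify the annulus $A_{1,0}$ via a Riemann--Hurwitz count. The only notable difference is that the paper establishes axiom \textbf{(i)} concretely by building a holomorphic motion of $V_\infty$ from B\"ottcher coordinates and extending it with the $\Lambda$-Lemma of Ma\~n\'e--Sad--Sullivan, where you invoke structural stability abstractly; also note a small slip---$\partial V_\infty$ is homeomorphic to $\Julia(P)$, not a Jordan curve, and $V_{1,0}$ is not itself a Fatou component of $q_\lambda$.
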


\begin{proof}
We first consider the basic dynamics of  the cubic polynomial $Q(z)=2z^3-3z^2+1$. The polynomial $Q$ is a  hyperbolic postcritically finite polynomial.  The critical points of $Q$ are  $\infty$, $0$, and $1$ since $Q'(z)=6z(z-1)$ and  $Q$ has two super-attracting  cycles, the first one is $\infty\mapsto\infty$ and the second one is $0\mapsto  1\mapsto 0$. We denote by $U_{\infty}$, $U_{0}$, and $U_{1}$ the Fatou domains
of $\Fatou(Q)$ containing $\infty$, $0$, and $1$, respectively. The dynamics of $Q$ on these Fatou domains is  $Q:U_{\infty}\to U_{\infty}$ with degree 3,  $Q:U_{0}\to U_{1}$ with degree 2, and $Q:U_{1}\to U_{0}$ also degree 2. In Figure \ref{fig:new_mcmullen} we show the dynamical planes of $Q$ and $q_{\lambda}$.

We show that $q_{\lambda}$ is a McMullen-like mapping of type $(Q,\Pole)$ checking in turn the conditions of Definition \ref{DefMcMullenLikeMap}, where $Q(z)=2z^3-3z^2+1$ and the pole data $\Pole$ is formed by the Fatou domain $U_{0}$ (containing the origin) associated to the integer $d=d_{0}=1$. We prove the first two conditions  in the definition of a McMullen-like mapping (Definition \ref{DefMcMullenLikeMap}) using an holomorphic motion, for $\lambda$ small,  of $U_{\infty}$ parametrized by $\lambda$ obtaining the immediate basin of attraction of $\infty$ of $q_{\lambda}$ as a result of this movement. Applying the $\Lambda-$Lemma, established by Ma\~{n}\'e, Sad and Sullivan (\cite{ManeSadSullivan}), we extend this holomorphic motion to the boundary of $U_{\infty}$ which coincides with $\Julia(Q)$. This establish that the boundary of the immediate basin of attraction of $\infty$ of $q_{\lambda}$ is a holomorphic motion of $\Julia(Q)$ and this new holomorphic motion is precisely the conjugacy between $q_{0}=Q$ acting on $\Julia(Q)$ and $q_{\lambda}$ acting on the boundary of the immediate basin of attraction of $\infty$ of $q_{\lambda}$.  Finally, we prove the last two conditions of Definition \ref{DefMcMullenLikeMap} studying the behavior of the critical points of $q_{\lambda}$.

First of all we can compute the critical points and the critical values of $q_{\lambda}$. Obviously  $z=\infty$ is a superattracting fixed point of $q_{\lambda}$, near infinity the rational map $q_{\lambda}$ is  conformally conjugate to $z\mapsto z^3$. Moreover, since the degree of $q_{\lambda}$ is 4 we have that $q_{\lambda}$ has 6 critical points, $\infty$ with multiplicity 2 and the other 4 critical points are the solution of $q_{\lambda}'(z)=0$. Easy computations show that finite critical points of $q_{\lambda}$ are the solutions of 
\[
6z^3(z-1)=\lambda \, ,
\]
\noindent for small values of $\lambda$ the above equation has three solutions near 0, denoted by $c_{0}(\lambda)$, and one solution near 1, denoted by $c_{1}(\lambda)$.  We have that
\[
c_{0}(\lambda) \simeq \sqrt[3]{\frac{-\lambda}{6}}  \quad \hbox{and} \quad c_{1}(\lambda)\simeq 1+\frac{\lambda}{6}.
\]
The corresponding critical values $v_{0}(\lambda)=q_{\lambda}(c_{0}(\lambda))$ and $v_{1}(\lambda)=q_{\lambda}(c_{1}(\lambda))$, are given by
\[
v_{0}(\lambda) \simeq 1-\frac{\lambda}{3} - \frac{9}{\sqrt[3]{36}} \lambda^{2/3}  \quad \hbox{and} \quad v_{1}(\lambda) \simeq \frac{\lambda}{1+\lambda/6} + \lambda^2/6+ \lambda^3/18 = 
\lambda + \lambda^2/3 + \mathcal O  (\lambda^3).
\]
Thus, we have that $v_{0}(\lambda) \to 1 $ and $v_1{(}\lambda) \to 0$ as $\lambda \to 0$. 

We denote by $V_{\infty}(\lambda)$ the immediate basin of attraction of $z=\infty$ of $q_{\lambda}$. As it is well known, there is a B\"ottcher coordinate $\phi_{\lambda}$ defined in a neighborhood of $\infty$ in $V_{\infty}(\lambda)$ that conjugates $q_{\lambda}$ to $z \to z^3$ in a neighborhood of $\infty$. If none of the free critical points lie in $V_{\infty}(\lambda)$, then it is well known that we may extend $\phi_{\lambda}$ so that it takes the entire immediate basin univalently onto $\C \setminus \overline{\D}$. If $\lambda$ is sufficiently small then none of the free critical points lie in $V_{\infty}(\lambda)$, since they are close to $0$ and $1$.

Thus, we define $\delta>0$ such that the B\"ottcher map $\phi_{\lambda}$ extends to the whole immediate basin of $V_{\infty}(\lambda)$ for all $|\lambda|<\delta$. We denote by $\D_{\delta} = \{ \lambda \in \C, \, |\lambda|<\delta \}$. We consider the following map
 
 \begin{equation*}
\begin{array}{llll}
H:&   V_{\infty}(0) \times \D_{\delta}  & \to &
 \CC  \\
& (z,\lambda) & \to &   \phi^{-1}_{ \lambda} \circ \phi_{0}(z).
  \end{array}
\end{equation*}

Next we verify that $H$ is a holomorphic motion. By construction, we
have that  $H(z,0)=\phi^{-1}_{0} \circ \phi_{0}(z)= z$. If we
fix the parameter $\lambda$ we can see that the map $H(\cdot,\lambda)$ is
injective. This is immediate since the B\"ottcher mapping
$\phi_{\lambda}$ is conformal. Finally, if we fix a point $z \in V_{\lambda}(0)$
we can see that $H(z,\cdot):\D \to \C$ is a holomorphic map. In
this case this map is a composition of holomorphic maps, since the
B\"ottcher map depends analytically on parameters. 

Applying the $\Lambda$-Lemma to $H$, we obtain a new
holomorphic motion $\overline{H}: \overline{V_{\infty}(0)} \times \D_{\delta} \to \CC$.
Hence, it follows that the boundary of $V_{ \infty}(\lambda)$ is the
continuous image under $\overline{H}$ of the Julia set of $Q$ and the following diagram commutes
$$\xymatrix{
	\Julia(Q) \ar[rr]^{\textstyle Q} \ar[d]_{\textstyle \overline{H}} && \Julia(Q)   \ar[d]_{\textstyle \overline{H}}  \\
	\partial V_{\infty}(\lambda) \ar[rr]_{\textstyle q_{\lambda}} && \partial V_{\infty}(\lambda) \\
}$$

\noindent proving thus conditions (i) and (ii) in Definition \ref{DefMcMullenLikeMap}.

Hereafter we consider $|\lambda|<\delta$. We observe that $z=0$ is the unique finite preimage of $z=\infty$, so in this case there exists a unique trap door $T_{0}(\lambda)$ containing the origin and such that $q_{\lambda}:T_{0}(\lambda) \to V_{\infty}(\lambda)$ has degree 1, thus $q_{\lambda}$ has a unique  trap door $T_{0}(\lambda)$ which is a simply connected domain. We can compute now the preimages of $T_{0}(\lambda)$. We claim that the preimages of $T_{0}(\lambda)$ is formed by three simply connected regions $W_{1}(\lambda), W_{-1/2}(\lambda)$ and $W_{0}(\lambda)$. To see the claim we observe that the polynomial  $Q(z)=2z^3-3z^2+1=2(z-1)^2(z+1/2)$ has  three roots, counted with multiplicity, and  are located at $z=1$ a double root and at $z=-1/2$ a simple root.  By continuity a neighborhood of $z=-1/2$, denoted by $W_{-1/2}(\lambda)$  is still mapped under $q_{\lambda}$  to $T_{0}(\lambda)$ with degree 1 and a neighborhood of $z=1$, denoted by $W_{1}(\lambda)$  is still mapped 
under $q_{\lambda}$ to $T_{0}(\lambda)$ with degree 2. Notice that $W_1(\lambda)$ contains the critical point $c_{1}(\lambda)$. Near the origin there exists another preimage of the trap door, denoted by $W_0(\lambda)$ since  $q_{\lambda}$ sends the trap door $T_{0}(\lambda)$ onto $V_{\infty}(\lambda)$.  

We can show now that the three remaining  critical points, denoted by $c_0(\lambda)$, belong to a doubly connected domain $A(\lambda)$ separating $T_{0}(\lambda)$ and $V_{0}=V(U_{0})$. We can compute the preimage of $W_{1}(\lambda)$ by $q_{\lambda}$. First we observe that $Q(3/2)=1$ since $Q(z)=z^2(2z-3)+1$, so again by continuity there exists a simply connected domain near $z=3/2$ that is mapped under $q_{\lambda}$ onto $W_1(\lambda)$. As we show before the corresponding critical values $v_{0}(\lambda)=q_{\lambda}(c_{0}(\lambda))$ are close to 1, for $\lambda$ sufficiently small they are in $W_{1}(\lambda)$. Thus the three critical points  $c_{0}(\lambda)$ belong to a preimage of $W_{1}(\lambda)$ and  applying the Riemann-Hurwitz formula we obtain that the three critical points belong to a double connected domain $A(\lambda)$, obtaining thus that $q_{\lambda}:A(\lambda)\to W_1(\lambda)$ with degree 3 and since $W_{1}(\lambda)$ is a preimage of the trap door we have that $q^2_{\lambda}(A(\lambda))=q_{\lambda}(W_{1}(\lambda))=T_{0}(\lambda)$. Since $q_{\lambda}$ has no other critical points than $c_{0}(\lambda)$ and $c_{1}(\lambda)$ this prove that for $\lambda$ sufficiently small $q_{\lambda}$ is a McMullen-like mapping.

Finally, we can check the arithmetic condition  (\ref{EqArithmeticCondition}) for the McMullen-like mapping $q_{\lambda}$. We have that $U_{0}\in\Pole$ and  $Q:U_{0}\to U_{1}$ with degree 2 and 
$U_{1}\notin\Pole$ and $Q:U_{1}\to U_{0}$ with degree 2, and $d=d_{0}=1$, so the arithmetic condition writes as
\[
\frac{1}{2} \left(\frac{1}{2}+\frac{1}{1} \right) = \frac{3}{4} < 1. 
\]
\end{proof}

\begin{figure}[ht]
	\centering
	\includegraphics[width=0.8\textwidth]{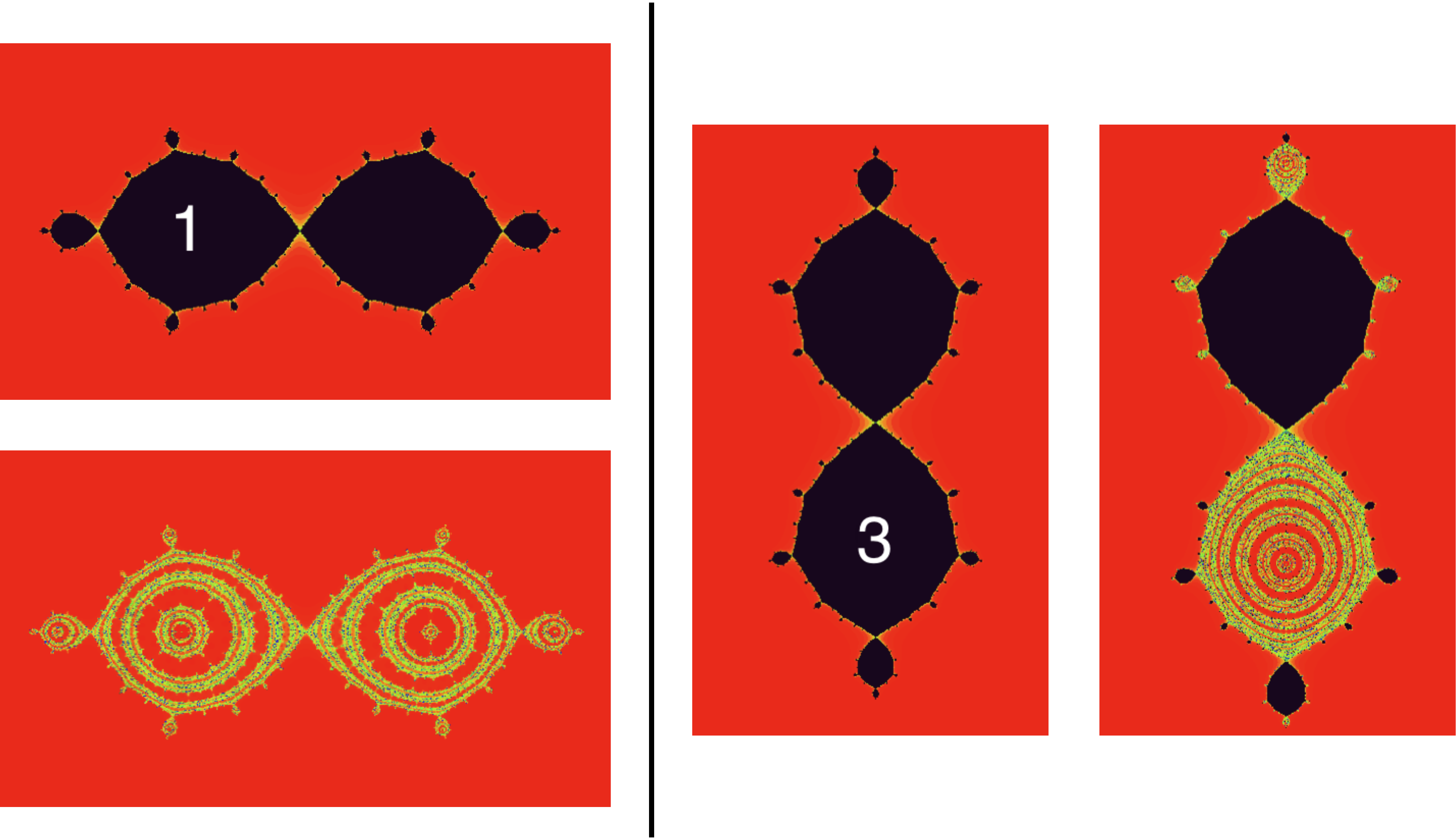}
	\setlength{\unitlength}{220pt}		  
	   \caption{\small{Two new examples of McMullen-like mappings. In the left we show the dynamical plane of the polynomial $Q(z)=2z^3-3z^2+1$ (up) and the rational map $q_{\lambda}(z)=2z^3-3z^2+1+10^{-5}/z$ (down). In the right the Milnor cubic polynomial $R_{i\sqrt{2}}(z)=z^3-i\frac{3 \sqrt{2}}{2}z^2$ (left) and the rational map $r_{\lambda}(z)=z^3-i\frac{3 \sqrt{2}}{2}z^2+ 10^{-9}/z^3$ (right). In both cases, we mark with the local degree the bounded Fatou component where we put a pole.}} 
	\label{fig:new_mcmullen}
\end{figure}

In the second example we consider the family of rational maps given by 
$g_{\lambda}(z)=z^{n}+c+\frac{\lambda}{z^{d}}$ where $c$ is a center of a hyperbolic component of the corresponding Multibrot set, thus the polynomial $P_{c}(z)=z^{n}+c$ is a $\HPcFP$. The case $n=d$ was considered previously in \cite{GeneralizedMcMullenDomain} or see  Proposition \ref{ex2:GeneralizedMcMullen}. When $n\neq d$ the only difference is the arithmetic condition (\ref{EqArithmeticCondition}) that in this case writes as $1/n+1/d<1$ and we have the following result.

\begin{pro}\label{pro:new2}
Let $g_{\lambda}(z)=z^n+c+\lambda/z^d$. For small enough values of $|\lambda|\neq 0$ and when the arithmetic condition $1/n+1/d<1$ is satisfied, the rational map $g_{\lambda}$ is a McMullen-like mapping.
\end{pro}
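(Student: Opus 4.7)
The proof will follow the template established for Proposition \ref{pro:new1}, verifying in turn the four conditions of Definition \ref{DefMcMullenLikeMap} for the pair $(P_{c},\Pole)$ where $P_{c}(z)=z^{n}+c$ and $\Pole=\{U_{0}\}$ with assigned integer $d_{0}=d$. First I would recall the basic dynamics of $P_{c}$: since $c$ is the center of a hyperbolic component of the Multibrot set, the origin is a super-attracting periodic point of some period $p$, with iterated images $c_{j}=P_{c}^{j}(0)$ belonging to the cycle of bounded Fatou domains $U_{0},U_{1},\dots,U_{p-1}$. The restriction $P_{c}:U_{0}\rightarrow U_{1}$ has degree $n$ (as $U_{0}$ contains the unique finite critical point $0$), while $P_{c}:U_{j}\rightarrow U_{j+1}$ has degree $1$ for $j\neq 0$. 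For $g_{\lambda}$, a direct computation of $g_{\lambda}'(z)=0$ gives the $n+d$ free critical points $c_{\lambda}=(\lambda d/n)^{1/(n+d)}$, all close to $0$ for small $|\lambda|$, with corresponding critical values $v_{\lambda}=c+\frac{n+d}{d}c_{\lambda}^{n}+O(c_{\lambda}^{2n})=c+O(|\lambda|^{n/(n+d)})$, so every $v_{\lambda}$ lies near $c_{1}=c$ inside $U_{1}$.

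For conditions (i) and (ii), I would apply the holomorphic motion technique used in Proposition \ref{pro:new1}. Since the free critical points remain close to $0$ for small $\lambda$, they cannot enter the immediate basin $V_{\infty}(\lambda)$ of $\infty$, so the B\"ottcher coordinate $\phi_{\lambda}$ extends univalently from a neighborhood of $\infty$ to the full basin $V_{\infty}(\lambda)$. The map $H(z,\lambda)=\phi_{\lambda}^{-1}\circ\phi_{0}(z)$ then defines a holomorphic motion of $V_{\infty}(0)=U_{\infty}$ parametrized by $\lambda\in\D_{\delta}$, and by the $\Lambda$-lemma of Ma\~n\'e--Sad--Sullivan it extends to $\overline{U_{\infty}}$, giving a conjugacy between $P_{c}|_{\Julia(P_{c})}$ and $g_{\lambda}|_{\partial V_{\infty}(\lambda)}$. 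This establishes condition (i). For condition (ii), since each $U_{j}$ with $j\neq 0$ contains no critical points of $P_{c}$ and thus persists under perturbation (either by running a parallel holomorphic motion on the attracting cycle, or by direct quasiconformal stability), the domains $V_{j}(\lambda)=V(U_{j})$ are simply connected, contain no critical points of $g_{\lambda}$, and $g_{\lambda}(V_{j})=V_{j+1}$.

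For condition (iii), the key observation is that $0$ is the unique finite preimage of $\infty$ under $g_{\lambda}$, so $g_{\lambda}^{-1}(V_{\infty}(\lambda))$ has exactly one simply connected component near $0$, the trap door $T_{0}(\lambda)$, which is mapped with degree $d$ onto $V_{\infty}(\lambda)$. Taking the preimage in $V_{0}(\lambda)$ of a simply connected neighborhood of $v_{\lambda}$ inside $V_{1}(\lambda)$ under $g_{\lambda}$, and applying the Riemann--Hurwitz formula exactly as in Proposition \ref{pro:new1}, produces the required doubly connected annulus $A(\lambda)\subset V_{0}\setminus\overline{T_{0}(\lambda)}$ containing all $n+d$ free critical points, with $g_{\lambda}|_{A(\lambda)}$ proper of degree $n+d$.

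The main obstacle, and the place where the arithmetic condition $1/n+1/d<1$ is crucial, is condition (iv): the postcritical orbit must actually land inside the trap door rather than merely inside $V_{0}$. Starting from a free critical point and iterating, one gets $|g_{\lambda}^{p}(c_{\lambda})|=O(|\lambda|^{n/(n+d)})$ because $P_{c}^{p}(0)=0$ and the perturbation near $c_1,\dots,c_{p-1}$ is holomorphic of order $|\lambda|^{n/(n+d)}$, while the trap door $T_{0}(\lambda)$, modelled near $0$ by the local expression $z\mapsto\lambda/z^{d}$, has ``radius'' of order $|\lambda|^{1/d}$. The inclusion $g_{\lambda}^{p}(c_{\lambda})\in T_{0}(\lambda)$ for small $|\lambda|$ thus reduces to the inequality $\frac{n}{n+d}>\frac{1}{d}$, which is exactly $\frac{1}{n}+\frac{1}{d}<1$. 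Once this asymptotic check is done, all four conditions of Definition \ref{DefMcMullenLikeMap} are satisfied, and $g_{\lambda}$ is a McMullen-like mapping of type $(P_{c},\Pole)$. Consistency with Theorem A follows from noting that the arithmetic condition (\ref{EqArithmeticCondition}) for this pair collapses to the single factor $\prod_{j\neq 0}\frac{1}{n_{j}}\cdot\left(\frac{1}{n}+\frac{1}{d}\right)=\frac{1}{n}+\frac{1}{d}<1$ since $n_{j}=1$ for $j\neq 0$.
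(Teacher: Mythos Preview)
Your proof is correct. The paper itself gives no detailed argument for this proposition: it simply remarks that the case $n=d$ was handled in \cite{GeneralizedMcMullenDomain} (Proposition~\ref{ex2:GeneralizedMcMullen}) and that for general $d$ the only change is that the arithmetic condition~(\ref{EqArithmeticCondition}) reads $1/n+1/d<1$, then states the result. Your explicit verification along the template of Proposition~\ref{pro:new1}---in particular the asymptotic comparison $|g_\lambda^{p}(c_\lambda)|=O(|\lambda|^{n/(n+d)})$ against the trap-door scale $|\lambda|^{1/d}$, which isolates precisely where $1/n+1/d<1$ is needed---supplies exactly the argument the paper leaves implicit.
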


In the third example we also consider the polynomial $P_{c}(z)=z^n+c$, where $c$ is such that the critical orbit is periodic of period $p$. We denote by $U_{0},U_{1},\cdots,U_{p-1}$ the Fatou domains of $\Fatou(P_{c})$ containing $0,P_{c}(0),\cdots,P_{c}^{p-1}(0)$, respectively. As in \cite{SingularPerturbationsQuadraticMultiplePoles} we can consider McMullen-like mappings of type $(P_{c},\Pole)$ where the pole data $\Pole$ is formed by the Fatou domains $\{U_{0},U_{1},\cdots,U_{p-1}\}$, and the corresponding positive integers $d_{0},d_{1},\cdots,d_{p-1}$. According to Theorem A the arithmetic condition (\ref{EqArithmeticCondition}) writes as
\[
\left(\frac{1}{2}+\frac{1}{d_{0}}\right) \left( 1 + \frac{1}{d_{1}} \right)  \cdots  \left( 1 + \frac{1}{d_{p-1}} \right) \, < \, 1,
\]
\noindent since $U_{0}$ is the only bounded Fatou domain containing a critical point and we have that $P_{c}:U_{0}\to U_{1}$ is 2-to-1 while in the rest of bounded Fatou domains $P_{c}$ acts conformally.  In  \cite{SingularPerturbationsQuadraticMultiplePoles}  (see also Proposition \ref{ex3:Multipoles}) the arithmetic condition to conclude that the rational map  $h_{\lambda}(z)=z^n+c+\lambda/\prod_{j=0}^{p-1}(z-c_{j})^{d_{j}}$ is a McMullen-like mapping is $2d_{1}>d_{0}+2$ and $d_{j+1}>d_{j}+1$ for every $1\leqslant j\leqslant p-1$ (with $d_{p}=d_{0}$). We claim that these conditions imply the arithmetic condition (\ref{EqArithmeticCondition}). To see the claim, we observe that

$$
\begin{array}{ccccc}
\dfrac{1}{2}+\dfrac{1}{d_{0}} & = & \dfrac{d_{0}+2}{2d_{0}} & < & \dfrac{d_{1}}{d_{0}} \\ 
1+\dfrac{1}{d_{1}} & = & \dfrac{d_{1}+1}{d_{1}} & < & \dfrac{d_{2}}{d_{1}} \\
\cdots  & = & \cdots & < & \cdots \\
1+\dfrac{1}{d_{p-1}} & = & \dfrac{d_{p-1}+1}{d_{p-1}} & < & \dfrac{d_{0}}{d_{p-1}} \\
\end{array}
$$
So, the arithmetic condition  (\ref{EqArithmeticCondition})  
\[
\left(\frac{1}{2}+\frac{1}{d_0}\right) \left( 1 + \frac{1}{d_1} \right)  \cdots  \left( 1 + \frac{1}{d_{p-1}} \right) \, < \,  \frac{d_1}{d_0} \cdot \frac{d_2}{d_1} \cdots  \frac{d_0}{d_{p-1}}=1,
 \]
\noindent is satisfied. However the conserve is not true. Take for example $p=2$, $d_{0}=3$ and $d_{1}=6$, for these values the arithmetic condition (\ref{EqArithmeticCondition}) is $(1/2+1/3)(1+1/6)\approx0.9722<1$, however the conditions in Proposition  \ref{ex3:Multipoles} are $2d_1>d_0+2$ and $d_0 > d_1+1$ which are not satisfied.  So, we have the following result.

\begin{pro}\label{pro:new3}
Let $P_{c}(z)=z^n+c$ be a $\HPcFP$ and the pole data $\Pole$ formed by the Fatou domains $\{U_{0},U_{1},\cdots,U_{p-1}\}$ and let $d_{0},d_{1},\cdots,d_{p-1}$ be positive integers. There exists a McMullen-like mapping of type $(P_{c},\Pole)$ if and only if $ \left(\frac{1}{2}+\frac{1}{d_0}\right) \left( 1 + \frac{1}{d_1} \right)  \cdots  \left( 1 + \frac{1}{d_{p-1}} \right) \, < \, 1$.
\end{pro}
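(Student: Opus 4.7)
The proposition is a direct specialization of Theorem A to the family $(P_{c},\Pole)$, so the plan is simply to compute the arithmetic condition (\ref{EqArithmeticCondition}) in this setting and observe that it reduces to the product inequality in the statement.

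First I would record the local degrees of $P_{c}$ on its periodic bounded Fatou domains. Since $P_{c}'(z) = nz^{n-1}$ (with $n=2$ in the context of the statement), the unique finite critical point of $P_{c}$ is $z=0\in U_{0}$. In the notation of Section~\ref{subsec:definition} this gives $N=1$, $p_{1}=p$, $n_{1,0}=2$, and $n_{1,j}=1$ for every $1\leqslant j\leqslant p-1$. Since by hypothesis every periodic bounded Fatou domain is picked in $\Pole$, the first product in (\ref{EqArithmeticCondition}) is empty and the second product runs over all of $\Z/p\Z$, so the maximum over $1\leqslant i\leqslant N$ is trivial.

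Substituting these values of the $n_{1,j}$ into (\ref{EqArithmeticCondition}) collapses it to
\[
\left(\frac{1}{2}+\frac{1}{d_{0}}\right)\prod_{j=1}^{p-1}\left(1+\frac{1}{d_{j}}\right) < 1,
\]
which is exactly the condition of Proposition \ref{pro:new3}. Both directions (existence and non-existence of a McMullen-like mapping of type $(P_{c},\Pole)$) then follow simultaneously from Theorem A, without any additional argument.

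There is no substantive obstacle: the computation is mechanical once the local degrees $n_{1,j}$ are identified. The only point I would emphasize, already settled in the paragraph preceding the statement, is the comparison with the sufficient conditions $2d_{1}>d_{0}+2$ and $d_{j+1}>d_{j}+1$ from \cite{SingularPerturbationsQuadraticMultiplePoles} recalled in Proposition \ref{ex3:Multipoles}: the telescoping chain $\frac{d_{0}+2}{2d_{0}}<\frac{d_{1}}{d_{0}}$ and $\frac{d_{j}+1}{d_{j}}<\frac{d_{j+1}}{d_{j}}$ shows those conditions imply (\ref{EqArithmeticCondition}), while the explicit example $p=2$, $d_{0}=3$, $d_{1}=6$ shows the converse fails. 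Hence Proposition \ref{pro:new3} really is sharper than the previously known sufficient criterion, and this sharpening is precisely what the application of Theorem A buys us.
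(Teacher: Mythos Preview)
Your proposal is correct and follows essentially the same route as the paper: the paragraph preceding Proposition~\ref{pro:new3} already explains that Theorem~A specializes to the stated inequality because $P_{c}:U_{0}\to U_{1}$ has degree $2$ while $P_{c}$ is conformal on the other $U_{j}$, and it gives exactly the telescoping comparison and the counterexample $p=2$, $d_{0}=3$, $d_{1}=6$ that you reproduce. There is nothing to add.
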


In the next example we show a McMullen-like mapping obtained from a polynomial $P$ with more that one cycle. We consider the Milnor cubic polynomials (\cite{CubicPolynomialMaps1}), this is a particular slice of the family of all the cubic polynomials fixing the behavior of one of the two critical points. More precisely, we consider the family of polynomials given by  $R_{a}(z)=z^3-\frac{3}{2}az^2$. The polynomial $R_{a}$ has a super-attracting fixed point at the origin, $R_{a}(0)=R'_{a}(0)=0$ and the other critical point is located at $z=a$. For example for  $a=i\sqrt{2}$ the polynomial $R_{i\sqrt{2}}$ has two finite super-attracting fixed points: one at $0$ and another one at $i\sqrt{2}$. We restrict to this value of $a$, however the same result is true assuming that  the polynomial $R_{a}$ is $\HPcFP$. We denote by $U_{0}$ and $U_{i\sqrt{2}}$ the Fatou domains containing $0$ and $i\sqrt{2}$, respectively, and we consider the pole data $\Pole$ formed by the Fatou domain $U_{0}$ and a positive integer $d=d_{0}$. Using the same ideas as in Proposition \ref{pro:new1} we have the following result.

\begin{pro}\label{pro:new4}
Let $r_{\lambda}(z)=z^3-i\frac{3\sqrt{2}}{2}z^2+\lambda /z^d$ with $d>2$.  For small enough values of $|\lambda|\neq 0$, the rational map $r_{\lambda}$ is a McMullen-like mapping. 
\end{pro}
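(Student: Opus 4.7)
The proof will mirror the strategy of Proposition \ref{pro:new1}. First I would verify that $R_{i\sqrt{2}}(z)=z^{3}-i\frac{3\sqrt{2}}{2}z^{2}=z^{2}(z-i\frac{3\sqrt{2}}{2})$ is indeed a $\HPcFP$: since $R_{i\sqrt{2}}'(z)=3z(z-i\sqrt{2})$, its finite critical points are $0$ and $i\sqrt{2}$, and a direct computation yields $R_{i\sqrt{2}}(i\sqrt{2})=(i\sqrt{2})^{2}\bigl(i\sqrt{2}-i\frac{3\sqrt{2}}{2}\bigr)=i\sqrt{2}$, so both $0$ and $i\sqrt{2}$ are super-attracting fixed points of local degree $2$. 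With $\Pole=\{U_{0}\}$ and $d_{0}=d$, both super-attracting cycles have period one: the cycle of $U_{0}\in\Pole$ contributes $\frac{1}{2}+\frac{1}{d}$ to the left-hand side of~(\ref{EqArithmeticCondition}), while the cycle of $U_{i\sqrt{2}}\notin\Pole$ contributes only $\frac{1}{2}$, which is already $<1$. Hence the hypothesis $d>2$ is exactly equivalent to~(\ref{EqArithmeticCondition}).

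Next, as in Proposition \ref{pro:new1}, I would use the B\"ottcher coordinate $\phi_{\lambda}$ at infinity for $r_{\lambda}$ together with the $\Lambda$-Lemma of Ma\~{n}\'{e}--Sad--Sullivan to build a holomorphic motion $\overline{H}:\overline{U_{\infty}}\times\D_{\delta}\rightarrow\CC$ conjugating $R_{i\sqrt{2}}|_{\partial U_{\infty}}$ to $r_{\lambda}|_{\partial V_{\infty}(\lambda)}$ for all $|\lambda|<\delta$; this is valid provided no free critical point of $r_{\lambda}$ lies in $V_{\infty}(\lambda)$, which holds for small $\lambda$ since the free critical values cluster near $0$ and near $i\sqrt{2}$. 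This settles conditions~\textbf{(i)} and~\textbf{(ii)} of Definition \ref{DefMcMullenLikeMap} on the grand orbit of $U_{\infty}$ and of $U_{i\sqrt{2}}$ (both fixed and disjoint from $\Pole$); in particular the simply connected domain $V(U_{i\sqrt{2}})$ is $r_{\lambda}$-invariant, and $r_{\lambda}|_{V(U_{i\sqrt{2}})}$ has the same degree and branching data as $R_{i\sqrt{2}}|_{U_{i\sqrt{2}}}$.

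Third, I would analyze the critical points of $r_{\lambda}$. The equation $r_{\lambda}'(z)=0$ rewrites as $3z^{d+2}(z-i\sqrt{2})=d\lambda$, which for small $|\lambda|$ has one simple root $c_{1}(\lambda)\simeq i\sqrt{2}$ and $d+2$ roots $c_{0}(\lambda)\simeq\bigl(d\lambda/(-3i\sqrt{2})\bigr)^{1/(d+2)}$ clustering around $0$. The critical value $v_{1}(\lambda)$ tends to $i\sqrt{2}$ and thus lies in $V(U_{i\sqrt{2}})$, iterating to an attracting fixed point there. The remaining critical values satisfy $v_{0}(\lambda)=O(\lambda^{2/(d+2)})$ (both the $z^{2}$ and the $\lambda/z^{d}$ terms contribute $O(c_{0}^{2})$), whereas the trap door $T_{0}(\lambda)$ (the simply connected preimage of $V_{\infty}(\lambda)$ containing $0$, on which $r_{\lambda}$ has degree $d$) has diameter of order $|\lambda|^{1/d}$. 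The inequality $\frac{2}{d+2}>\frac{1}{d}$ --- which is exactly $d>2$ --- then ensures that $v_{0}(\lambda)\in T_{0}(\lambda)$ for small $|\lambda|$, giving condition~\textbf{(iv)}.

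Finally, to produce the doubly connected region $A(\lambda)$ required by condition~\textbf{(iii)}, I would take the connected component of $r_{\lambda}^{-1}(T_{0}(\lambda))\cap V_{0}(\lambda)$ that contains the $d+2$ critical points $c_{0}(\lambda)$; note that $f^{t_{0,0}}(A)=T_{0}$ since $t_{0,0}=1$. Because $c_{0}(\lambda)$ lies approximately on a circle of radius $|\lambda|^{1/(d+2)}$ strictly outside $T_{0}(\lambda)$, and because the polynomial preimages of $0$ for $R_{i\sqrt{2}}$ outside $U_0$ are isolated and away from this annular region, all $d+2$ critical points must belong to a common component of $r_{\lambda}^{-1}(T_{0}(\lambda))$ inside $V_{0}(\lambda)$. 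A Riemann--Hurwitz count on this component, with $n_{0,0}+d_{0,0}=d+2$ critical points and target the simply connected $T_{0}(\lambda)$, then forces it to be a doubly connected domain $A(\lambda)$ of degree $d+2$ separating $\partial V_{0}(\lambda)$ from $\overline{T_{0}(\lambda)}$, in agreement with Lemma \ref{LemDynamicsOnVij}. I expect the main obstacle to be the careful verification that the single component $A(\lambda)$ captures all $d+2$ critical points simultaneously and not only a strict subset: the approximate rotational symmetry of the $c_{0}(\lambda)$ at first order suggests this, but one must rule out a splitting of $r_{\lambda}^{-1}(T_{0}(\lambda))\cap V_{0}(\lambda)$ into several components of lower degree, which is where the Riemann--Hurwitz identity and the explicit asymptotics combine to close the argument.
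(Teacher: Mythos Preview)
Your proposal is correct and follows exactly the approach the paper indicates: the paper explicitly leaves the verification that $r_{\lambda}$ is a McMullen-like mapping ``as an exercise'' referring to the method of Proposition~\ref{pro:new1}, and only checks the arithmetic condition $\max\{1/2,\,1/2+1/d\}<1$. Your argument fills in precisely that exercise---B\"ottcher coordinate plus $\Lambda$-Lemma for \textbf{(i)}--\textbf{(ii)}, the asymptotic comparison $2/(d+2)>1/d\Leftrightarrow d>2$ to place $v_{0}(\lambda)$ in the trap door for \textbf{(iv)}, and Riemann--Hurwitz for the annulus in \textbf{(iii)}---so there is nothing to add.
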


We leave as an exercise to check that $r_{\lambda}$ is a McMullen-like mapping (see Proposition \ref{pro:new1})  and we only check the arithmetic condition (\ref{EqArithmeticCondition}). In this case we have two cycles, so $N=2$, and the arithmetic condition writes as
\[
 \max\{ 1/2, 1/2+1/d \} < 1\, ,
\] 
\noindent since $U_{0}\in\Pole$, $R_{i\sqrt{2}}:U_{0}\to U_{0}$ with degree two, $U_{i\sqrt{2}}\notin\Pole$, $R_{i\sqrt{2}}:U_{i\sqrt{2}}\to U_{i\sqrt{2}}$ with degree two, and $R_{i\sqrt{2}}$ acts conformally in all the rest of the bounded Fatou domains of $\Fatou(R_{i\sqrt{2}})$.  In Figure \ref{fig:new_mcmullen} we show the dynamical planes of the Milnor cubic polynomial $R_{i\sqrt{2}}$ and the McMullen-like mapping $r_{\lambda}$.

Finally, we turn our attention to the McMullen family $f_{\lambda}(z)=z^n+\lambda/z^d$. We can obtain two new examples of McMullen-like mappings topologically conjugate on their Julia set to the McMullen family (see Theorem \ref{ThmJuliaHomeomorphic}). We consider the family of rational maps given by $\widetilde{f_{\lambda}}(z)=z^n+\lambda/(z-a)^d$, in this case we first fix a small value of $|\lambda|\neq 0$ and then for $|a|\neq 0$ small enough the rational map $\widetilde{f_{\lambda}}$ is a McMullen-like mapping, where the polynomial $P$ is $z\mapsto z^n$ and the pole data $\Pole$ is also the Fatou domain $U_{0}$ of $\Fatou(P)$ containing the origin. In this case the pole is moved to $a$ and not located at the origin as in the McMullen family. The arithmetic condition (\ref{EqArithmeticCondition}) for $\widetilde{f_{\lambda}}$ is $1/n+1/d<1$, as in the McMullen family. The Julia set of $\widetilde{f_{\lambda}}$ has already been studied in \cite{EvolutionMcMullenDomain}. The same result is true if we take the family of rational maps $\widehat{f_{\lambda}}(z)=z^n+L(z)/z^d$ where $L(z)$ is a polynomial with $\deg(L) <d$. Thus, if $|L(0)|\neq 0$ is small enough the map $\widehat{f_{\lambda}}$ is a McMullen-like mapping of the same type of $f_{\lambda}$ and also with the same arithmetic condition.


\bibliographystyle{alpha}
\bibliography{biblio}
\addcontentsline{toc}{section}{References}

\end{document}